\newcommand{\C}{\mathbb C}
\renewcommand{\P}{\mathbb P}
\newcommand{\Q}{\mathbb Q}
\newcommand{\Z}{\mathbb Z}
\newcommand{\N}{\mathbb N}
\newcommand{\D}{\mathbb D}
\newcommand{\E}{\mathbb E}
\newcommand{\cO}{\mathcal O}
\newcommand{\cB}{\mathcal B}
\newcommand{\cF}{\mathcal F}
\newcommand{\Ygen}{Y_{\mathrm{gen}}}
\renewcommand{\epsilon}{\varepsilon}
  \DeclareMathOperator{\Pf}{Pf}
 \DeclareMathOperator{\Proj}{Proj}
\DeclareMathOperator{\Sing}{Sing}
\newtheorem{thm}{Theorem}
\newtheorem{cor}[thm]{Corollary}
\newtheorem{lem}[thm]{Lemma}
\newtheorem{prop}[thm]{Proposition}
\theoremstyle{definition}
\theoremstyle{remark}
\theoremstyle{remark}
\newtheorem{eg}[thm]{Example}
\numberwithin{equation}{section} \numberwithin{thm}{section}
\newcommand{\QED}{\ifhmode\unskip\nobreak\fi\quad\ensuremath{\mathrm{QED}}}
\title{Polarized Calabi--Yau 3-folds in codimension 4}
\author{Gavin Brown and Konstantinos Georgiadis}
\date{}
\begin{document}
\maketitle

\begin{abstract}
We construct Calabi--Yau 3-folds as orbifolds embedded
in weighted projective space in codimension~4.
For each Hilbert series that is realised, there are at least two
different components of Calabi--Yau 3-folds.
\end{abstract}

\setcounter{tocdepth}{2}

\section{Introduction}

Throughout this paper, a {\em Calabi--Yau 3-fold} is a normal projective variety $X$
of dimension~3, with $K_X=0$ and $h^1(X,\cO_X)=0$ and with at worst canonical singularities.
In practice, we construct varieties whose singularities are cyclic quotients,
so $X$ is an orbifold, and moreover the singularities we consider admit crepant resolutions,
and every Calabi--Yau 3-fold we discuss has a crepant resolution by a Calabi--Yau manifold.

The classification of Calabi--Yau 3-folds is a central problem in birational geometry,
and one that seems very far from being solved.
We treat Calabi--Yau 3-folds as polarised varieties, $X,A$, including a choice of
ample $\Q$-Cartier divisor $A$. With this choice, we may regard $X$ as embedded
in weighted projective space by choosing homogeneous generators for the
graded ring
\[
R(X,A) = \bigoplus_{m\in\N} H^0(X,mA)
\]
and writing $X = \Proj R(X,A)$. The degrees $a_1,\dots,a_N$ of a set of minimal
generators of $R(X,A)$ are well defined, and so we write $X\subset\P^{N-1}(a_1,\dots,a_N)$.
The case $N=5$ of weighted hypersurfaces is classified by Kreuzer--Skarke \cite{KS}
into 184,026 distinct deformation families, of which 7,555 have orbifolds
as their general member.
There are lists of families in codimension~$\le3$, that is $N\le7$, in~\cite[Table~1]{BKZ}
and \cite{grdb},
although these are not complete classifications.

Our interest here is in Calabi--Yau 3-folds embedded in codimension~4,
the case $N=8$. We first prove the following as a model result.
\begin{thm}
\label{th!main1}
There are at least two deformation families
of quasismooth, projectively normal Calabi--Yau 3-folds
$X \subset \P^7(1,1,1,1,1,1,3,3)$,
nonsingular away from two quotient singularities, $2 \times \frac{1}{3}(1,1,1)$,
and with Hilbert series
\begin{displaymath}
P_{X}(t) = 
\frac{1 - 2t^3 - 6t^4 - t^6 + 6t^5 + 4t^6 + 6t^7 - t^6 - 6t^8 - 2t^9 + t^{12}}{(1-t)^6(1-t^3)^2}.
\end{displaymath}
\end{thm}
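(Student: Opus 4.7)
The plan is to realize $X$ by Kustin--Miller unprojection in each of the two Papadakis--Reid formats, Tom and Jerry, which are designed precisely to produce distinct codimension 4 Gorenstein families sharing one Hilbert series. First I would read off the graded Betti table from the Hilbert numerator: the self-dual shape about the adjunction weight $12 = \sum a_i$ confirms $K_X = 0$, and the coefficients indicate that $R(X,A)$ is Gorenstein with nine defining equations and sixteen first syzygies, which is the numerical type of a Pfaffian unprojection from codimension 3.

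I would then produce the downstairs data. Take $Y \subset \P^6(1,1,1,1,1,1,3)$ a codimension 3 Calabi--Yau defined by the $4\times 4$ Pfaffians of a $5\times 5$ skew matrix $M$ with entry degrees dictated by the predicted Betti numbers, and containing the divisor $D \cong \P^2 = \{x_4 = x_5 = x_6 = s = 0\}$ that will collapse to one of the two $\frac{1}{3}(1,1,1)$ points. Imposing the $\mathrm{Tom}_k$ condition (respectively $\mathrm{Jer}_{k,l}$) on $M$ --- that a specified set of entries lie in the ideal of $D$ --- ensures that $D \subset Y$ is a Weil divisor with the resolution required by Kustin--Miller, and a generic $M$ in each format gives a $Y$ quasismooth away from the coordinate point $P_s$. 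The unprojection then adjoins a new weight 3 variable $y$ together with four equations of the form $y \cdot (\text{linear in } x_4,x_5,x_6) = (\text{cubic on } Y)$, supplementing the five Pfaffians to yield nine equations on $X \subset \P^7(1,1,1,1,1,1,3,3)$; the image $P_y$ of $D$ becomes the second $\frac{1}{3}(1,1,1)$ point while $P_s$ remains untouched.

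The main obstacle is to show that the Tom and Jerry outcomes are genuinely different deformation families, not merely two presentations of the same family. Quasismoothness away from $P_s$ and $P_y$ reduces to a Jacobian check on the eight affine coordinate charts and matching the Hilbert series follows from the Papadakis--Reid unprojection formula, so the content lies in distinguishing the two components. I would do so either by a parameter count --- comparing the dimensions of the spaces of admissible matrices $M$ in each format, modulo the $\mathrm{GL}(5)$ row--column action and the reparametrisation of the ambient coordinates --- or, more intrinsically, by exhibiting an invariant of the crepant resolution, for instance the number and arrangement of exceptional curves attached to each $\frac{1}{3}(1,1,1)$ point. This splitting into two components is the signature prediction of the Papadakis--Reid construction in codimension 4, and establishing it for the specific weights $(1,1,1,1,1,1,3,3)$ is the crux of the theorem.
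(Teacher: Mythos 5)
Your overall strategy---produce $X$ twice, once from a Tom matrix and once from a Jerry matrix, and then separate the two outputs---is indeed the ansatz behind the paper, but two of your load-bearing claims do not hold up as stated. First, a generic $5\times5$ skew matrix in Tom or Jerry format for $I_D$ does \emph{not} give a $Y\subset\P(1^6,3)$ quasismooth away from $P_s$: forcing the divisor $D\cong\P^2$ into the Pfaffian format forces isolated nodes on $D$ (12 in the Jer$_{45}$ case, 11 in the Tom$_1$ case), and these nodes are not a nuisance but the whole mechanism --- the unprojection is the $D$-ample small resolution of exactly these nodes followed by the contraction of $D$ to the second $\frac{1}{3}(1,1,1)$ point. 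Second, dismissing quasismoothness of $X$ as ``a Jacobian check on the eight affine coordinate charts'' hides the actual difficulty: in codimension~4 with nine equations there is no usable Bertini statement and no tractable hand computation, which is precisely why the paper does not start from a generic formatted matrix at all. Instead it starts one step lower, with a $(3,3)$ complete intersection $Z\subset\P^5$ containing two planes $\D,\E$ --- disjoint in one construction, meeting transversely in a point in the other --- where Bertini \emph{does} apply off the base locus; an explicit standard choice of the forms shows $Z$ has exactly 12 nodes on each plane (one shared in the intersecting case, so 23 versus 24 in total), and then \emph{two} successive Kustin--Miller unprojections, with quasismoothness transported along the explicit birational factorisations, produce $Y$ (in Jer$_{45}$, resp.\ Tom$_1$ format) and finally $X\subset\P(1^6,3^2)$. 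Without this codimension-2 scaffolding your construction has no proof of quasismoothness and, worse, no access to the node counts.

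Those node counts are also what your separation step is missing. A parameter count of admissible matrices modulo the row--column and coordinate actions would not establish two components: equal or unequal dimensions do not preclude one family from lying in the closure of the other, and in any case the relevant object is the Hilbert scheme of the $X$'s, not the space of matrices. Your alternative suggestion (an invariant of the resolution) is the right instinct, but to make it work you need exactly the data above: both $Y$ and $Y'$ are conifold degenerations of the same quasismooth $\Ygen\subset\P(1^6,3)$, so by Clemens' formula the small resolutions satisfy $\chi(\tilde Y')=\chi(\tilde Y)+2$ (twelve versus eleven nodes resolved), and since the final contractions are crepant blowups of the same $\frac{1}{3}(1,1,1)$ singularity, $\chi(X')=\chi(X)+2$. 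Different Euler characteristics force different deformation families (and, via Batyrev, non-birationality, though that is only a corollary). As written, your proposal asserts the conclusion that Tom and Jerry give distinct components as a ``signature prediction'' rather than proving it, and the quasismoothness claims it rests on are either unproved or false.
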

\noindent
The Hilbert series, expressed in this way, faithfully reports that the general such $X$ is
defined by 9 equations (two of degree~3, six of degree~4 and one of degree~6)
related by 16 syzygies (six of degree~5, etc.); that is, $R(X,A)$
has a $9\times 16$ first syzygy matrix, as is common for Gorenstein codimension~4.
The two families of this theorem are distinguished by their Euler characteristic, and so
in particular members of different families cannot be birational
(Corollary~\ref{cor!batyrev} below).
The central striking result that there are multiple deformation families in codimension~4
echoes both the same phenomenon for Fano 3-folds \cite{TJ} and Gross'
obstructedness result for Calabi--Yaus~\cite{gross}.

This is one of a large series of theorems, each one
constructing general members of two or more deformation families
of orbifold Calabi--Yau 3-folds embedding in codimension~4.
Theorem~\ref{main theorem} in \S\ref{s!web} below is a typical example.
The crucial point is the (quasi)smoothability of varieties in a Pfaffian format,
and \S\ref{s!comments} recalls a salutary example where this fails.
The rest of the introduction gives a geometric explanation of Theorem~\ref{th!main1}
and the techniques we use; then \S\S\ref{sec!pf1}--\ref{s!codim4}
give proofs of the results with additional comments in~\S\ref{s!comments}.
To reach codimension~4, we use double Gorenstein unprojection, 
with the novelty that unprojection divisors cannot
be chosen to be coordinate planes; see~\S\ref{s!codim4}.

\subsection{The web of Calabi--Yaus}
\label{s!web}

To find other codimension~4 embeddings,
in \S\ref{s!hilb} we begin an audit of rational functions which have
the same coarse numerical properties as Hilbert series of polarised
Calabi--Yau 3-folds; in \S\ref{s!codim3}--\ref{s!codim4}
we construct Calabi--Yau 3-folds that realise each case.
Already the fairly crude Hilbert series data allows us to
predict the codimension of the expected general member; our
considerations in \S\ref{s!hilb} are largely heuristic, but they are enough to provide a
line of argument for the eventual existence proof.
This approach is sometimes called the `graded rings method';
see \cite{ABR,Bk3} or \cite[\S3.2]{Rgraded}, for example.

Applying this method to find $X,A$ having
$h^0(X,A)=3$, $h^0(X,2A)=6$ and quotient singularities
$\cB = \left\{ n \times \frac{1}3(1,1,1), m \times \frac{1}5(1,1,3) \right\}$,
for various values of $n,m\ge0$,
we have candidates for embedded varieties as shown in Table~\ref{tab!cands}
(see \S\ref{s!hilb} for details).

\begin{table}[ht]
\renewcommand{\arraystretch}{2}
\setlength\tabcolsep{5pt}
\footnotesize
\[
\begin{tabular}{c|cccc}
${}_{\displaystyle n} \diagdown {}^{\displaystyle m}$ & 0 & 1 & 2  & 3\\
[0.2cm] \hline
0   & $\times$
& $X_{16}  \subset  \P(1^3,5,8)$    &  $ X_{6,10} \subset \P(1^3,3,5^2) $ &
\cellcolor[gray]{0.7} $X_{6^3, 8^3, 10^3} \subset \P(1^3,3^2,5^3) $  \\
1   & $X_{18}\subset\P(1^3,6,9)$
& $X_{11}  \subset  \P(1^3,3,5)$    & \cellcolor[gray]{0.9} $ X_{6^2, 8^2,10} \subset \P(1^3,3^2,5^2) $ \\
2   & $X_{12}\subset\P(1^3,3,6)$
& $X_{6,8} \subset \P(1^3,3^2,5)$     & \cellcolor[gray]{0.7} $X_{6^4, 8^4,10} \subset \P(1^3,3^3,5^2)  $ \\
3   & $X_9\subset\P(1^3,3^2)$
& \cellcolor[gray]{0.9} $X_{6^3, 8^2}\subset \P(1^3,3^3,5)$    \\
4   & $X_{6^2}\subset\P(1^3,3^3)$ 
& \cellcolor[gray]{0.7} $X_{6^6, 8^3} \subset \P(1^3,3^4,5) $ \\
5 & \cellcolor[gray]{0.9} $X_{6^5}\subset\P(1^3,3^4)$ 
& \multicolumn{3}{c}{Subscript of $X_{6^5}$ indicates $5$ defining equations of degree $6$,}    \\
6 & \cellcolor[gray]{0.7} $X_{6^9}\subset\P(1^3,3^5)$
& \multicolumn{3}{c}{and $\P^7(1,1,1,3,3,3,3,3)$ is abbreviated as $\P(1^3,3^5)$, etc.}    \\
\end{tabular}
\]
\caption{Candidate Calabi--Yau 3-folds with
$\cB = \left\{ n \times \frac{1}3(1,1,1), m \times \frac{1}5(1,1,3) \right\}$ for various
values of $n, m \ge0$.
(The case $n=m=0$ does not arise. Higher values of $n,m$ lead to candidates
in codimension $\ge5$.)
Those in codimension~3 are shaded light grey, those
in codimension~4 in darker grey. \label{tab!cands}}
\end{table}

Assuming for a moment that families of Calabi--Yau 3-folds exist as in Table~\ref{tab!cands},
we may treat the table as a small portion of the {\em web of Calabi--Yaus}, as
described in \cite{CGGK,GH,Prim} following Reid's fantasy conjecture in \cite{R2}.
Indeed, denoting a deformation family in Table~\ref{tab!cands} by
$\cF^{(n,m)}$ and general member of $\cF^{(n,m)}$ by $X^{(n,m)}$,
making the {\em Gorenstein projection} of \cite{PR} from an orbifold $\frac{1}3(1,1,1)$ point
$P\in X^{(n,m)}$ factorises as
\begin{displaymath}
\xymatrix{
 &  \widetilde{X}  \ar[dl] \ar[dr]  & & \\
X^{(n,m)} &&  \quad\overline{X}^{(n-1,m)}\ , &
}
\end{displaymath}
where $X^{(n,m)}\longleftarrow\widetilde{X}$ is the crepant resolution of $P\in X^{(n,m)}$ and
$\widetilde{X}\longrightarrow\overline{X}^{(n-1,m)}$ is the contraction of finitely
many rational curves to ordinary double points
on a special member of $\cF^{(n-1,m)}$.
We check that $\overline{X}^{(n-1,m)}$ admits a deformation to an
orbifold member of $\cF^{(n-1,m)}$ (see Lemma~\ref{lem!smoothY} below), and,
following \cite{Prim}, we think of this as a connection between the two families
which we denote symbolically by $\cF^{(n,m)}\rightarrow\cF^{(n-1,m)}$.
(In the cited papers, the `web of Calabi--Yaus' is a graph whose vertices
correspond to deformation families of {\em nonsingular} Calabi--Yaus;
but nonsingularity is a minor
point in our context, since  $X_{n,m}$ is $\Q$-factorial with unique crepant resolution
that factorises through $\widetilde{X}\rightarrow X^{(n,m)}$.)

Similarly, projection from an orbifold $\frac{1}5(1,1,3)$ point of $X^{(n,m)}$
describes an analogous connection $\cF^{(n,m)}\rightarrow\cF^{(n+1,m-1)}$.
Therefore the graph of connections between the families $\cF^{(n,m)}$ of
Table~\ref{tab!cands} is connected, in accordance with \cite[Conjecture~0.1]{Prim}.

In fact, we prove that these all entries of Table~\ref{tab!cands}
exist as proposed, and that in codimension~4
they arise in multiple deformation families.

\begin{thm}
\label{main theorem}
The candidates in Table~\ref{tab!cands} all exist as quasismooth, projectively
normal Calabi--Yau 3-folds in the proposed ambient spaces, nonsingular away from
quotient singularities $\cB=\left\{ n \times \frac{1}3(1,1,1), m \times \frac{1}5(1,1,3) \right\}$.
Moreover, each of those in codimension~4
admit more than one deformation component as follows: there are at least
\begin{enumerate}[(a)]
\setlength{\parskip}{0pt}
\item
2 deformation families of
$X_{6^9} \subset \P^7(1^3,3^5)$,
$\cB=\left\{ 6\times\frac{1}3(1,1,1) \right\}$.
\item
\label{thm!part}
2 deformation families of
$X_{6^6,8^3} \subset \P^7(1^3,3^4,5)$,
$\cB=\left\{4 \times \frac{1}{3}(1,1,1), \frac{1}{5}(1,1,3)\right\}$.
\item
\label{thm!partc}
3 deformation families of
$X_{6^4,8^4,10} \subset \P^7(1^3,3^3,5^2)$,
$\cB=\left\{2 \times \frac{1}{3}(1,1,1), 2\times\frac{1}{5}(1,1,3)\right\}$.
\item
\label{case-d}
2 deformation families of
$X_{6^3,8^3,10^3} \subset \P^7(1^3,3^2,5^3)$,
$\cB=\left\{3\times\frac{1}{5}(1,1,3)\right\}$.
\end{enumerate}
\end{thm}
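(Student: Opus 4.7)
The plan is to exploit the web structure described just after Table~\ref{tab!cands}: every entry is connected to its neighbours by Gorenstein (un)projection from an orbifold point, so I would work by induction on codimension, starting from the hypersurfaces and complete intersections at the left of the table and unprojecting outwards. Multiplicity of deformation families in codimension~4 is then obtained by carrying out the unprojection in essentially different ways (different unprojection divisors, or projection from points of different types), and distinguishing the results by their Euler characteristic via Corollary~\ref{cor!batyrev}.

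First I would dispatch the codimension~$\le 3$ entries directly. The hypersurfaces ($X_9$, $X_{12}$, $X_{18}$, $X_{11}$, $X_{16}$) and codimension~$2$ complete intersections ($X_{6^2}$, $X_{6,8}$, $X_{6,10}$) succumb to Fletcher's standard quasismoothness criterion for general members in a weighted projective space; the $K_X=0$ condition amounts to matching the sum of weights to the sum of defining degrees, which is immediate from the numerical data in each row. For the codimension~$3$ entries (shaded light grey) the target ring is Gorenstein of codimension~$3$, so I would use the Buchsbaum--Eisenbud $5\times 5$ Pfaffian format: write a skew syzygy matrix whose entry-degrees match the Hilbert series, then verify quasismoothness by local analysis on each coordinate stratum and compute the basket $\cB$ from the vanishing loci.

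For each codimension~$4$ entry (darker grey) I would carry out a double Gorenstein unprojection from a suitable codimension~$3$ Pfaffian model $Y$, following \cite{PR} and the machinery to be recalled in \S\ref{s!codim4}. This requires choosing $Y$ together with two divisors $D_1,D_2\subset Y$ whose unprojection variables supply the two extra generators needed to reach codimension~$4$. Unlike the standard Tom/Jerry constructions for Fano 3-folds in \cite{TJ}, the divisors cannot be taken to be coordinate planes, so I would present each $D_i$ by its own equations of positive degree and verify projective normality, the claimed Hilbert series, and quasismoothness of the unprojected variety by direct computation. Lemma~\ref{lem!smoothY} is exactly the hook that lets me pass from the special unprojection output to a general orbifold member of the target family.

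Multiplicity of components in codimension~4 then arises because each codimension~$4$ cell has at least two inequivalent projection directions in the web: for part~(a) there are two distinct codimension~$3$ models $Y$ whose unprojections land in the same $\P(1^3,3^5)$, and similarly for parts~\ref{thm!part}, \ref{case-d}; part~\ref{thm!partc} with three components corresponds to three inequivalent such $Y$ (one via a $\frac{1}{3}(1,1,1)$ projection, one via a $\frac{1}{5}(1,1,3)$ projection, and a third mixed possibility). I would compute the Euler characteristic of the crepant resolution in each case from the number of rational curves contracted in the projection together with the contributions from the basket, and use the resulting numerical invariants to separate the constructions into distinct deformation components. The main obstacle I anticipate is quasismoothness of the unprojected models when the $D_i$ are not coordinate planes: the defining ideal is no longer in Pfaffian form, the Jacobian check must be done by hand in local charts, and one must rule out bad strata in the ambient weighted projective space; \S\ref{s!comments} flags exactly this kind of failure in an analogous setting, so I would exhibit explicit quasismooth members rather than rely on a general deformation argument.
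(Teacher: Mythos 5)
Your overall architecture (settle codimension $\le 3$ directly, reach codimension 4 by double unprojection, separate components by Euler characteristic and Corollary~\ref{cor!batyrev}) matches the paper, but the step that actually produces \emph{more than one} deformation family is wrong as you have set it up. You attribute the multiplicity to ``inequivalent projection directions in the web'' --- different codimension-3 models $Y$ reached by projecting from points of different types, e.g.\ a $\frac13(1,1,1)$ versus a $\frac15(1,1,3)$ projection for part~(\ref{thm!partc}). This cannot work in general: in part~(a) the basket is $6\times\frac13(1,1,1)$ and in part~(\ref{case-d}) it is $3\times\frac15(1,1,3)$, so there is only one type of orbifold point to project from, hence only one ``direction'' in your sense, yet the theorem claims two families in each of these cases. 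Moreover, even when two projection types exist, nothing in your plan identifies an invariant that distinguishes the resulting codimension-4 varieties; unprojection always fills in the same Hilbert series and basket, so without more you only get (possibly coincident) members of one Hilbert scheme.

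The paper's mechanism is different: the several families all arise from the \emph{same} pair of unprojection divisors (a $\P^2$-type divisor $\D$ and a $\P(1,1,3)$-type divisor $\E$), but with different incidence configurations $\D\cup\E$ inside a codimension-2 complete intersection $Z$ --- e.g.\ disjoint versus meeting in a prescribed finite set. After unprojecting $\E$, this places $\D$ inside the $5\times5$ Pfaffian model $Y$ in Jer$_{45}$ format in one case and Tom format in the other, and the number of nodes of $Y$ lying on $\D$ differs between the two (the nodes at $\D\cap\E$ are resolved by the first unprojection). Since both $Y$'s are flat degenerations of the same general Pfaffian $\Ygen$ (Lemma~\ref{lem!smoothY}, whose role is exactly this comparison, not a smoothing of the final $X$), Clemens' conifold-transition count gives $\chi(\Ygen)=\chi(\tilde Y)+2\cdot(\#\text{nodes on }\D)$, so the final unprojections $X$, $X'$ have different Euler characteristics and hence lie in different components; the three families of part~(\ref{thm!partc}) come from three such configurations, not three singularity types. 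A secondary point: the paper does not verify quasismoothness on the codimension-4 model directly, nor choose two divisors inside a codimension-3 $Y$ ab initio; it starts in codimension 2, where Bertini applies away from $\D\cup\E$, and checks quasismoothness along the exceptional curves of each unprojection by an explicit ``standard choice'' Jacobian computation. Your plan to do the Jacobian check by hand on the codimension-4 equations (which you would first have to write down via \cite{P}) is substantially harder and is precisely what the serial-unprojection strategy is designed to avoid.
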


The Hilbert series in case (\ref{case-d}) of Theorem~\ref{main theorem}
can be realised by at least one more
deformation family of Calabi--Yau orbifolds $X\subset\P(1^3,3^2,5^3)$,
but whose general member has quotient singularities
$\left\{\frac{1}3(1,1,1), \frac{1}3(2,2,2),3\times\frac{1}5(1,1,3)\right\}$;
we do not consider this family here, and its connection into the web does not
work in the same way, but see \cite{BCG} for this construction.

Our approach is by the Tom and Jerry ansatz of \cite[\S2]{TJ},
which we review in \S\ref{s!TJ}.
We give an elementary model example in \S\ref{s!example},
sketching the proof of Theorem~\ref{th!main1}, which is given in detail in~\S\ref{sec!pf1}.
In \S\S\ref{s!codim3}--\ref{s!codim4}
we construct Calabi--Yau 3-folds in codimensions~3 and~4.
Nonsingularity analysis is the crucial central point.
Bertini's theorem does not apply directly, and in any case in the presence of
nontrivial weightings linear systems tend to have large base loci;
compare Examples~3.1--3 in \cite{BKZ}.
One approach would be to write down random examples and test them by computer,
and when done on a large scale this may be the most
effective way; compare \cite{TJ}, where a few hundred cases are
settled systematically by computer.
The significant early examples of Calabi--Yau 3-folds in codimensions~3
and~4 given by Tonoli \cite{tonoli} and Bertin \cite{Ber} 
all use computer algebra as a crucial tool for checking nonsingularity.
Our aim is to give purely theoretical arguments, and in fact
we do not use computer algebra at all.

\subsection{A Hilbert scheme with two Calabi--Yau components}
\label{s!example}

To prove Theorem~\ref{th!main1}, we construct 
two polarised orbifolds $X,A$ and $X',A'$ with
the same Hilbert series but different topological invariants.
These serve as general members of two different components
of a common Hilbert scheme.
They arise as follows; \S\ref{sec!pf1} gives the details, working with
explicit equations.

In each case we construct a degree~$(3,3)$ complete intersection $Z=Z_{3,3}\subset \P^5$
that contains the union $\D\cup \E$ of two linearly-embedded copies of $\P^2\subset\P^5$.
The two constructions differ only in that $\D$ and $\E$ are chosen to be
disjoint in one case and to meet transversely in a single point in the other.
In either case, the general such $Z$  has 12 nodes on each of $\D$ and $\E$
and is nonsingular elsewhere; when $\D$ and $\E$ intersect, one of the
nodes of $Z$ lies at the intersection point, so there are only 23 nodes in total
on $Z$ in this case, rather than~24.

Blowing up each of $-\D$ and $-\E$ in $Z$ gives a small resolution
of singularities $\widetilde{Z}\longrightarrow Z$, and the birational
transforms of the planes $\D\cup \E \subset \widetilde{Z}$, now disjoint
in either case, can
be contracted to a pair of quotient singularities $\frac{1}3(1,1,1)$
on a Calabi--Yau orbifold $X$.
The polarising divisor on $Z$ lifts to one on $\widetilde{Z}$, still
meeting each of $\D$ and $\E$ in a line, and then descends to $X$.

The general nonsingular $Z_{3,3}$ has Euler characteristic
$\chi_{\mathrm{top}}=c_3=-16\times 9 = -144$.
In either situation, $\widetilde{Z}$ arises from $Z$ as a conifold transition,
passing through the degeneration to nodes and then their small resolution,
so has Euler characteristic $-144$ plus twice the number of nodes
appearing on $\D\cup \E$; thus the Euler characteristic is
either $-96$ or $-98$, depending on the case.
This distinguishes the topology of the resulting orbifolds $X$, and
so they lie in different deformation families.

In \S\ref{sec!pf1},
we show that the polarisation on $X$ embeds it in weighted projective space
$X\subset \P(1^6,3^2)$ in either case.
We calculate the two different cases separately, but first recall
Tom and Jerry from~\cite{TJ}.

\subsection{Review of Tom and Jerry}
\label{s!TJ}

The unprojection theorem \cite[Theorem 1.5]{PR} takes as input a Gorenstein
scheme $Y$ and a codimension~1 Gorenstein subscheme $D\subset Y$.
It returns a new Gorenstein scheme $X$ whose coordinate ring is an extension
of that of $Y$ by a single new generator.
The theorem is stated locally, but \cite[\S2.4]{PR} gives the graded version
we use here. The relationship between $Y$ and $X$ depends on the precise
situation, but here, as in \cite{TJ} and \cite{CPR}, it has a straightforward
birational description; see the proof of Proposition~\ref{prop!E} below.

Section~2.3 of \cite{TJ} defines two ways to set up the unprojection
data $D\subset Y$ in the case
that $Y\subset w\P^N$ is a codimension~3 scheme minimally defined by the maximal
Pfaffians of an antisymmetric $5\times 5$ matrix $M = (m_{ab})$ of forms on~$w\P^N$
and $D\subset w\P^N$ is a codimension~4 complete intersection inside $w\P^N$.
These two are called {\em Tom} and {\em Jerry}, and are respectively denoted and defined by:
\begin{description}
\item[Tom$_i$] for any $1\le i \le5$:
the entry $m_{ab}$ of $M$ must lie in $I_D$ whenever neither $a$ nor $b$ equals $i$.
In other words, all entries of $M$ lie in $I_D$ except possibly those in row and column~$i$.
\item[Jer$_{ij}$] for any $1\le i < j\le5$:
the entry $m_{ab}$ of $M$ must lie in $I_D$ whenever either $a$ or $b$ equals $i$ or $j$.
In other words, all entries of both rows and columns $i$ and $j$ of $M$ lie in~$I_D$,
while other entries are unconstrained.
\end{description}
Papadakis \cite{P} gives formulas for the unprojection $X$, but we
do not need these here.

In practice, it is often easy to show that some Tom or Jerry setup cannot work:
the degrees of entries in the matrix, combined with the row--column symmetries
one can use to simplify entries, often mean that without loss of generality some
entries can be assumed to be zero; that often implies that $Y$ is so singular
that $X$ could not be quasismooth.
We do not repeat this analysis here, since it follows \cite[\S5]{TJ} closely.

The harder part is to prove that some cases of $D\subset Y$ do
work to give, say, quasi\-smooth~$X$.
We use unprojection arguments from codimension~2, where Bertini's theorem
applies, for this part.
So in practice our approach is in three steps:
\begin{description}
\item[Step 1:]
Analyse the numerics of Hilbert series to generate candidates
for Calabi--Yau 3-folds in codimensions up to~4. (See~\S\ref{s!hilb}.)
\item[Step 2:]
Use the Tom and Jerry ansatz for candidates in codimension~4
to identify those cases that may plausibly exist as unprojections
of some $D\subset Y$ in codimension~3.
\item[Step 3:]
Construct varieties $Z$ in  codimension~2 that contain two divisors
$D\cup E\subset Z$ for which the unprojection of $E\subset Z$
realises $D\subset Y$ in codimension~3 for each case proposed
in Step 2.
\end{description}
The statements of our results come from the heuristic arguments of
Steps~1 and~2, and so are largely invisible in the discussion,
while the proofs are complete and rigorous applications
of Step~3, with various configurations of $D\cup E\subset Z$
introduced out of the blue because they work.

\subsection{Review of Hilbert series heuristics}
\label{s!hilb}

A polarised variety $X,A$ has a Hilbert series $P_X = P_{X,A}(t)$ defined by
\[
P_X = \sum_{m\in\N} h^0(X,mA) t^m,
\]
the Hilbert series of its homogeneous coordinate ring $R(X,A)$.
The orbifold Riemann--Roch formula computes $P_X$ from initial data
$P_1 = h^0(X,A)$, $P_2 = h^0(X,2A)$ and the basket $\cB$ of singularities.
We use the formula of Buckley--Reid--Zhou \cite[Theorem~1.3]{BRZ}, which applies
in any case where $K_X$ is a multiple of $A$ (in our case the zero multiple)
and $\cB$ is a collection of isolated cyclic quotient singularities.

It is convenient to be able to recognise when a series $P(t)$
is plausibly the Hilbert series of some $X,A$ in codimensions 3 or~4.
We mainly encounter the $5\times 5$ Pfaffian case
in codimension~3 and the $9\times 16$ resolution shape in codimension~4.
When $X \subset \P(a_0, \dots, a_n)$ is a variety in codimension~3
defined by the Pfaffians of a skew-symmetric $5 \times 5$-matrix $M$ with
homogeneous polynomials as entries, its  Hilbert series has the form
\begin{displaymath}
P_X(t) =
\frac{1 - t^{d_1} - \dots - t^{d_5} + t^{e_5} + \dots + t^{e_1}-t^k}{\prod_{i=0}^{n}(1-t^{a_i})}
\end{displaymath}
where the $d_i = \deg \Pf_i(M)$ are the degrees of the Pfaffians,
and $k = 2\sum \deg M_{ii}$ is the adjunction number; see \cite{CR}.
The exponents $e_i=k-d_i$ are the degrees of the syzygies.

If $X \subset \P(a_0,\dots,a_n)$ is in codimension~4
with $9 \times 16$ resolution shape, then we have
\begin{displaymath}
P_X(t) =
\frac{1 - t^{d_1} - \dots - t^{d_9} + t^{e_1} + \dots + t^{e_{16}} - t^{c_1} - \dots - t^{c_9}
+ t^{k}}{\prod_{i=0}^{n} (1-t^{a_i})}
\end{displaymath}
where again the $d_i$ are the degrees of the equations, the $e_i$ are the
degrees of first syzygies, $c_i$ the degrees of second syzygies and $k$
the adjunction number.
Typically of course the $e_i$ tend to be bigger than $d_j$, and $c_i$
tend to be bigger than $e_j$, so quite generally, as a rough guide, we interpret the number
of sign changes in the coefficients of the numerator of $P_X$ as the codimension of $X$
in $\P(a_0,\dots,a_n)$; but note the example of
Theorem~\ref{main theorem}(\ref{thm!partc}), where
\[
P_{X}(t) =  \frac{1 - 4t^6 - 4t^8 + 4t^9 - t^{10} + 8t^{11} - t^{12} + 4t^{13} - 4t^{14} - 4t^{16} + t^{22}}{(1-t)^3(1-t^3)^3(1-t^5)^2}
\]
requires a little tolerance for overlapping syzygy degrees in this rule of thumb.

In our search for varieties, then, our starting point is to fix two integers
$P_1$ and $P_2$ (which will be $h^0(X,A)$ and $h^0(X,2A)$ if we are
successful in constructing $X,A$) and the basket $\cB$ of singularities
(which we hope will be the polarised singularities of $X,A$, if successful).
Applying Riemann--Roch with these values gives
a rational function which we attempt to express in a form, such
as those above, to suggest an embedding of $X$ into some weighted projective space.
We illustrate this standard technique with an example.

\begin{eg}
Let $P_1 = 3$, $P_2 = 6$, and $\cB = \left\{4 \times \frac{1}{3}(1,1,1), \frac{1}{5}(1,1,3) \right\}$.
Orbifold Riemann--Roch \cite{BRZ} computes a function that we denote by~$P$:
\begin{displaymath}
P = \frac{1 -t - t^3 + t^4}{(1-t)^4} + 4 \times \frac{t^3}{(1-t)^3(1-t)} + \frac{t^5 + t^3}{(1-t)^3(1-t)}
\end{displaymath}
To express $P$ in a recognisable `free resolution form' as above, we first expand
as a power series,
\begin{displaymath}
P = 1 + 3t + 6t^2 + 14t^3 + 27t^4 + 46t^5 + \cdots,
\end{displaymath}
and then multiply by factors $1-t^{a_i}$ for weights $a_i$ that we
guess inductively.
In this case, $P_1=3$, the first nontrivial coefficient of a power of $t$,
so we expect three variables of weight~1, and
correspondingly multiply both sides of the above equation by $(1-t)^3$ to get
\begin{displaymath}
(1-t)^3P = 1 + 4t^3 + t^5 + 4t^6 + t^8 + 4t^9 + \cdots.
\end{displaymath}
The next nontrivial power of $t$ is $4t^3$, so we multiply by $(1-t^3)^4$
to eliminate this term:
\begin{displaymath}
(1-t)^3(1-t^3)^4P = 1 + t^5 - 6t^6 - 3t^8 + \cdots.
\end{displaymath}
Continuing, we multiply by $(1-t^5)$ to get
\begin{displaymath}
(1-t)^3 (1-t^3)^4 (1-t^5) P = 1-6t^6 - 3t^8 + 8t^9 + 8 t^{11} - 3 t^{12} - 6t^{14} + t^{20}
\end{displaymath}
and conclude that
\begin{displaymath}
 P = \frac{1-6t^6 - 3t^8 + 8t^9 + 8 t^{11} - 3 t^{12} - 6t^{14} + t^{20}}{(1-t)^3 (1-t^3)^4 (1-t^5)}.
\end{displaymath}
(Comparing with the original expression shows that the tail of
the power series cancels.)
The number of sign changes in the numerator of $P$ is four, which 
suggests a codimension four 3-fold
$X \subset \P(1^3,3^4,5)$ with Hilbert series $P_X=P$; this is case (b)
of Theorem~\ref{main theorem}.
In \S\ref{s!codim4} we prove
that there are in fact two deformation families of such a quasismooth~$X$.
\end{eg}

Analogous calculations with different
$\cB = \left\{ n \times \frac{1}{3}(1,1,1), m \times \frac{1}{5}(1,1,3) \right\}$
give the candidates of Table~\ref{tab!cands}.
There are a couple of tricks we employ beyond the basic game
to end up with candidates that can be realised in practice.
The singularities of $\cB$ should arise as restrictions of those
of the ambient weighted projective space; for example,
if $\cB$ includes $\frac{1}5(1,1,3)$, then the weights $a_i$
should include some that are divisible by~5, and at least
one that is congruent to~3 modulo~5. And a $\frac{1}5(1,1,3)$
point would lead to a $\frac{1}3(1,1,1)$ point after projection, so unless
the degree $A^3\le 1/15$ (so there could be no projection)
there should be some weights divisible by~3 to accommodate this.
If the power series process does not provide such weights, we must experiment
a little by hand.
Although this is somewhat heuristic, it is justified in the end by the
results of \S\S\ref{s!codim3}--\ref{s!codim4}.

\section{The proof of Theorem~\ref{th!main1}}
\label{sec!pf1}

\subsection{Disjoint planes, parallel unprojection and Jerry}
\label{s!jerry}

Consider $\P^5$ with homogeneous coordinates $x,y,z,u,v,w$.
Let $\D=(u=v=w=0)$ and $\E=(x=y=z=0)$.
Any 3-fold $Z=(f=g=0)$ containing $\D\cup \E$ is defined by two polynomials
\begin{eqnarray*}
 f &=& X_2u + Y_2v + Z_2w + U_2x + V_2y + W_2z \\
 g &=& X_2'u + Y_2'v + Z_2'w + U_2'x + V_2'y + W_2'z,
\end{eqnarray*}
where the forms $X_2=X_2(x,y,z)$, \dots, $Z_2'(x,y,z)$
are quadrics in $x,y,z$ and the forms
$U_2=U_2(u,v,w)$, \dots, $W_2'(u,v,w)$ are
quadrics in $u,v,w$.
Since all these forms can be chosen freely,
the general such $Z=(f=g=0)$ is nonsingular
away from the base locus $\D\cup \E$ by Bertini's theorem.

\begin{lem}
\label{lem!Z}
In the notation above, the singular locus $\Sing(Z)$ of $Z$ consists of exactly 12 points
on $\D$ and 12 points on $\E$, and all the singular points are ordinary nodes.
\end{lem}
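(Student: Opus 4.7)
The plan is to compute the Jacobian of $(f,g)$ separately along the two planes $\D$ and $\E$; by the evident symmetry $(x,y,z)\leftrightarrow(u,v,w)$ of the setup, it is enough to treat $\D$. I first use Bertini's theorem to handle points away from the base locus: the linear system of cubics of the given shape is base-point free outside $\D\cup\E$, since any $P\not\in\D\cup\E$ has both a nonzero $x_i$ and a nonzero $u_j$ and so admits an explicit nonvanishing monomial such as $x_i^2u_j$. Hence for generic $(f,g)$, Bertini gives that $Z$ is nonsingular away from $\D\cup\E$.

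I next compute the Jacobian at a point $P\in\D$, where $u=v=w=0$. Every monomial of $f$ has total degree $\ge1$ in the normal variables $(u,v,w)$, so the partials $\partial_xf,\partial_yf,\partial_zf$ vanish at $P$, while
\[
(\partial_uf,\partial_vf,\partial_wf)|_P=\bigl(X_2(P),Y_2(P),Z_2(P)\bigr),
\]
and similarly for $g$. So $P$ is singular on $Z$ if and only if the $2\times3$ matrix of quadrics
\[
N \;=\; \begin{pmatrix} X_2 & Y_2 & Z_2 \\ X_2' & Y_2' & Z_2' \end{pmatrix}
\]
on $\D\cong\P^2$ has rank $\le1$ at $P$. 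For generic data this degeneracy locus is a reduced zero-dimensional scheme cut out by the three quartic $2\times2$ minors, and its length follows from the Eagon--Northcott resolution
\[
0 \to \cO_\D(-6)^2 \to \cO_\D(-4)^3 \to \cO_\D \to \cO_{\Sing(Z)\cap\D} \to 0.
\]
Computing Euler characteristics on $\P^2$ gives $1-3\cdot3+2\cdot10=12$, so there are exactly $12$ singular points on $\D$; the identical argument on $\E$ contributes $12$ more, and $\D\cap\E=\emptyset$ in $\P^5$.

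The main obstacle is showing each singular point is an ordinary node. At $P\in\Sing(Z)\cap\D$, after a linear change of $(u,v,w)$ and an adjustment $g\mapsto g-cf$, I may assume the linear part of $f$ at $P$ is $u$ and the linear part of $g$ at $P$ is zero. Working in the affine chart $x=1$ with local coordinates $(y,z,u,v,w)$, the singularity type of $Z$ at $P$ is that of the restriction of the quadratic part of $g$ to the tangent hyperplane $\{u=0\}$, a quadratic form in the four variables $(y,z,v,w)$: $P$ is an ordinary node precisely when this form has maximal rank~$4$. Non-degeneracy is a Zariski open condition on the finite set of singular points, so to confirm that it holds for generic data it suffices to exhibit a single $(f,g)$ for which all $24$ singularities are ordinary nodes---and the explicit Jerry configuration of \S\ref{s!jerry}, which realises the Calabi--Yau of Theorem~\ref{th!main1}, will do.
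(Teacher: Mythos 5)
Your first half runs essentially parallel to the paper: you identify $\Sing(Z)\cap\D$ with the rank~$\le 1$ locus of the $2\times 3$ matrix of quadrics given by the normal derivatives, and your length count of 12 via the Hilbert--Burch/Eagon--Northcott resolution is a legitimate alternative to the paper's way of getting that number (the paper itself uses this kind of Hilbert series count for the analogous lemma in \S\ref{s!codim3}). Your local criterion for a node (linear part of $f$ equal to $u$, linear part of $g$ zero, quadratic part of $g$ restricted to $u=0$ of rank~4) is also correct, modulo the tacit assumption that the matrix has rank exactly 1, not 0, at each degeneracy point.

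The genuine gap is the last step, which is circular. Having correctly reduced the lemma to exhibiting one pair $(f,g)$ containing $\D\cup\E$ whose singular points are all nodes (openness in the irreducible parameter space then does the rest), you offer as witness ``the explicit Jerry configuration of \S\ref{s!jerry}''. But that configuration \emph{is} the family whose general member Lemma~\ref{lem!Z} describes; its nodality is exactly what is to be proved, and everything downstream (Proposition~\ref{prop!E}, Theorem~\ref{th!main1}) depends on this lemma, so nothing there can be quoted. No explicit $(f,g)$ is ever written down and checked; for the same reason your assertions that the degeneracy scheme is reduced and zero-dimensional ``for generic data'', and the exclusion of rank-0 points, are left unsupported. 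The paper's proof consists precisely of producing this witness: writing $f=Ax+By+Cz$, $g=Dx+Ey+Fz$ with $A,\dots,F\in I_\D$ and making the standard choice $A=E=0$ with $B,C,D,F$ general, so that $\Sing(Z)\cap\E=(B=C=0)\cup(B=D=0)\cup(D=F=0)$ is a union of three transverse intersections of pairs of conics, i.e.\ 12 distinct reduced points, which are therefore nodes; the general $Z$ is then handled by semicontinuity, and $\D$ by symmetry. Your proposal needs this explicit verification (or some other concrete member checked by hand) to be a proof; as written, the nodality claim rests on the statement being proved.
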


\begin{proof}
Consider first the singularities that lie on $\E$.
We rewrite the equations of $Z$ as
\begin{equation}
\label{e!fg}
\begin{pmatrix}
f\\g
\end{pmatrix}
=
\begin{pmatrix}
A&B&C\\
D&E&F
\end{pmatrix}
\begin{pmatrix}
x\\y\\z
\end{pmatrix}
=
\begin{pmatrix}
0\\0
\end{pmatrix},
\end{equation}
where $A,\dots,F$ are quadric forms  in the ideal $I_\D=(u,v,w)$.
In this notation, the singularities of $Z$ on $\D$ occur where the $2\times 3$
matrix in~\eqref{e!fg} drops rank. If we choose both $A$ and $E$ to
be the zero form, this rank condition is equivalent to
\[
BD = CD = BF = 0.
\]
With this choice, the singular locus of $Z$ on $\E$ is
\begin{equation}
\label{e!singZ}
\Sing(Z) \cap \E = (B = C = 0) \cup (B = D = 0) \cup (D = F = 0).
\end{equation}
Each of the factors in this union is the intersection of two
conics in $\E=\P^2$, and so for general $B,C,D,F$ this
singular locus is 12 distinct reduced points, which are therefore necessarily nodes.
Since the singularities are nodes for this choice of $A,\dots,F$, they
are also nodes for the general member $Z$ of this family.

The analogous argument for $\D$ shows that the general member
of the family also has exactly 12 nodes on $\D$,
which completes the proof of the lemma.
\end{proof}

We refer to this choice of $A, \dots, F$ in the proof of the lemma
as a \emph{standard choice};
we use it throughout to show that a Jacobian ideal is reduced
in general, and so the singularities are nodes.

Next we make the $\E$-ample small blowup of $Z$ followed by contraction of $\E$ in one step
using the Kustin--Miller unprojection of \cite[ Theorem~1.5]{PR}; we set up the algebra
of unprojection first before proving this geometric claim in Proposition~\ref{prop!E}.

Unprojecting $\E\subset Z$ introduces a new variable $s$, the {\em unprojection variable}.
With the equations of $Z$ written as in~\eqref{e!fg}
the unprojection calculus realises $s$ as the rational form
\begin{equation}
\label{e!s}
s =
\frac{BF-CE}x =
\frac{AF-CD}{-y} =
\frac{AE-BD}z.
\end{equation}
The five equations---namely $f, g$ and with the implicit equations for $s$ above, $sx = BF-CE$
and so on---can be mounted as the maximal Pfaffians of an anti-symmetric $5\times 5$
matrix (we display only the strict upper triangle)
\begin{equation}
\label{e!j12}
\begin{pmatrix}
z & y & A & D \\
   & x & B & E \\
        && C & F \\
              &&& s
\end{pmatrix}.
\end{equation}
In the language of \S\ref{s!TJ},
the matrix \eqref{e!j12} is in Jerry format for the ideal~$I_\D$; more
precisely, it is Jer$_{45}$, since each entry of the fourth and fifth rows
and columns lies in~$I_\D$.

These five equations define a 3-fold $Y\subset \P(1^6,3)$, in coordinates $x,y,z,u,v,w,s$,
which contains the birational transform of $\D$, also denoted $\D\subset Y$,
defined by $(u=v=w=s=0)$.

\begin{prop}
\label{prop!E}
The point $P_s = (0:0:0:0:0:0:1) \in Y$ is quasismooth, and is a quotient
singularity $\frac{1}{3}(1,1,1)$. Away from $P_s$, $Y$ is smooth outside $\D$,
and has exactly 12 isolated singularities on $\D$ all of which are nodes.
\end{prop}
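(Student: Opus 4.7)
The plan is to break the statement into three claims and address each in turn: (i) quasismoothness of $Y$ at $P_s$ as a $\frac{1}{3}(1,1,1)$ point, (ii) smoothness of $Y$ on the complement of $\D\cup\{P_s\}$, and (iii) identification of exactly twelve ordinary nodes on $\D\subset Y$.

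For (i), I work in the affine chart $s=1$, which carries a residual $\mu_3$-action with weights $(1,1,1,1,1,1)$ on $(x,y,z,u,v,w)$ because $\deg s = 3$. The three Pfaffians that involve $s$ read (up to sign) $x=BF-CE$, $y=CD-AF$ and $z=AE-BD$, expressing $x,y,z$ as explicit polynomials in $u,v,w$. Substituting these into the remaining Pfaffians $f=Ax+By+Cz$ and $g=Dx+Ey+Fz$ makes both vanish identically by the standard $2\times 3$ determinantal identity, so this chart is isomorphic to $\mathbb{A}^3_{u,v,w}$. The $\mu_3$-quotient then presents $P_s$ as the $\frac{1}{3}(1,1,1)$ orbifold point.

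For (ii) and (iii), I use the projection $\pi\colon Y\setminus\{P_s\}\to Z$ that forgets $s$. On $Z\setminus\E$ at least one of $x,y,z$ is non-zero, so the unprojection formula \eqref{e!s} realises $s$ as a regular function on this open set and $\pi$ is an isomorphism onto $Z\setminus\E$. Since $\D\subset Z\setminus\E$, Lemma~\ref{lem!Z} immediately transfers: $Y$ is smooth over $Z\setminus(\D\cup\E)$ and has exactly twelve nodes on its birational transform $\D\subset Y$, giving (iii) and part of (ii). Over $\E$ the fibre of $\pi$ is empty except above the twelve nodes of $\Sing(Z)\cap\E$ recorded in \eqref{e!singZ}; above each such node all five Pfaffians vanish identically in $s$, producing a weighted $\P(1,3)$-fibre that passes through $P_s$. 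In the $s=1$ chart this fibre appears as a line through the origin of $\mathbb{A}^3_{u,v,w}$, smooth off the origin; so the only points left to check are the twelve $s=0$ endpoints $(0{:}0{:}0{:}u_0{:}v_0{:}w_0{:}0)$.

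The main obstacle is this last verification. It requires passing to a chart $u=1$ (or $v=1$, $w=1$) around the node and checking by direct Jacobian calculation that three of the five Pfaffians furnish locally independent differentials, cutting out a smooth $3$-fold germ. The standard choice $A=E=0$, which sorted the $\E$-nodes into the three configurations of \eqref{e!singZ}, is exactly what makes the relevant $2\times 2$ submatrix of first derivatives nondegenerate; a single model calculation at, say, a node with $B=C=0$ then disposes of all twelve cases by symmetry among the three configurations and among the three coordinates $u,v,w$.
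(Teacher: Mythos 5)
There is a genuine gap, and it sits exactly where your argument leans hardest. The forms $A,\dots,F$ in \eqref{e!j12} are quadrics in \emph{all six} variables that merely lie in the ideal $I_\D=(u,v,w)$ (e.g.\ $B$ may contain a monomial $xu$); they are not polynomials in $u,v,w$ alone. So on the chart $s=1$ the equations $x=BF-CE$, $y=CD-AF$, $z=AE-BD$ do \emph{not} express $x,y,z$ as explicit functions of $u,v,w$, and the chart is not the graph of a map over $\mathbb{A}^3_{u,v,w}$. Your determinantal identity does show (correctly) that $f$ and $g$ vanish on the locus cut out by the three unprojection equations, but the conclusion ``this chart is isomorphic to $\mathbb{A}^3_{u,v,w}$'' does not follow --- and in fact it is false: an open set of $Y$ isomorphic to $\mathbb{A}^3/\mu_3$ would make $Y$, hence the Calabi--Yau complete intersection $Z_{3,3}$, unirational. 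What does survive is the local statement at the origin: since $BF-CE$, etc., lie in $(u,v,w)^2$, the Jacobian of the three unprojection equations with respect to $x,y,z$ is the identity at $P_s$, so $x,y,z$ are eliminated \emph{near} $P_s$ and one gets the quasismooth $\frac13(1,1,1)$ point; this is precisely the paper's argument for part (i).

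The damage is to parts (ii)--(iii) of your plan: you use the (false) global $\mathbb{A}^3$ claim to declare all points of $Y$ with $s\neq 0$ smooth, and in particular the points of the exceptional curves $\Gamma_i$ away from their $s=0$ endpoints, leaving only those twelve endpoints for a Jacobian check. But smoothness of $Y$ along $\Gamma_i\setminus\{P_s\}$ (including the $s\neq0$ locus) is exactly the nontrivial residual verification; the isomorphism $Z\setminus\E\cong Y\setminus\Gamma$ says nothing there. The paper handles it by restricting the $5\times 7$ Jacobian of the Pfaffian equations to each of the three components of \eqref{e!singZ} (equivalently, to the cones $\Gamma_i$), with the standard choice $A=E=0$, and checking rank $3$ along the whole of $\Gamma\setminus\{P_s\}$ --- essentially the computation you propose at the endpoints, but carried out along the entire curves. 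If you replace your chart argument by that check (your endpoint computation extends verbatim, since the entries such as $s-B_zF$ keep the relevant minors nonzero for all values of $s$), the proof closes and then coincides with the paper's.
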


\begin{proof}
The three unprojection equations ($sx=BF-CE$, etc., of~\eqref{e!s}) eliminate $x,y,z$
near~$P_s$, so $Y$ is quasismooth there, and has an orbifold chart
(analytically) locally isomorphic to a neighbourhood of the origin in the
index~3 orbifold chart of the ambient weighted
projective space; locally it is just the origin in the quotient of $\C^3$
with coordinates $u,v,w$ by $\Z/3$ acting by multiplication, which
is the claim.

The unprojection map $\varphi\colon Z \dashrightarrow Y$ is defined by
\[
(x,y,z,u,v,w) \longmapsto (x,y,z,u,v,w,s)
\]
where $s$ is the rational function defined in~\eqref{e!s}.
Multiplying through by the denominators of~$s$ gives polynomial expressions
for $\varphi$,
\begin{eqnarray*}
(x,y,z,u,v,w) &\longmapsto& (x^2,xy,xz,xu,xv,xw,BF-CE) \\
 && \qquad = (xy,y^2,yz,yu,yv,yw,CD-AF) \\
 && \qquad = (xz,yz,z^2,zu,zv,zw,AE-BD),
\end{eqnarray*}
from which it follows immediately that $\varphi$ is
defined everywhere on $Z$ except possibly at the locus
\[
x=y=z=AE - BD = CD - AF = BF - CE=0.
\]
This locus is precisely the set of nodes $\{Q_1,\dots,Q_{12}\}$ on $\E$.
The birational inverse $\psi\colon Y\dashrightarrow Z$ is simply the elimination of~$s$,
and so is defined everywhere on $Y$ except at the point $P_s$.

Denote the (closure of the) preimage of a node $Q_i$ on
$\D$ under $\psi$ by $\Gamma_i$.
Obviously, $\Gamma_i \cong \P^1$, since it is the cone on $Q_i$ over $P_s$.
The restriction of $\varphi$ on $Z \setminus \E$ gives an isomorphism
\begin{equation}
\label{e!ZEYG}
Z \setminus \E \longrightarrow Y \setminus \Gamma,
\qquad
\text{where $\Gamma = \bigcup_{ i = 1}^{12}\Gamma_i$}.
\end{equation}
Note that $\D \subset Z \setminus \E$, so a neighbourhood of $\D$ is mapped isomorphically,
and its 12 nodes survive on~$Y$.
Since $Z$ is smooth away from $\D$ and $\E$ the isomorphism~\eqref{e!ZEYG}
shows that $Y$ is smooth away from $\D$ and $\Gamma$.
Furthermore, the equations defining $\varphi$ show that it induces a contraction
\[
\E \setminus \Sing(Z) \longrightarrow \{P_s\}.
\]
It remains only prove that $Y$ is quasismooth on $\Gamma\setminus \{P_s\}$.
This is elementary:
the locus $\Gamma$ is defined by the same equations~\eqref{e!singZ}
that define the nodes on~$\E$, and using a {\em standard choice} of forms $A,\dots,F$,
the Jacobian matrix of $Y$ restricted to any of the
three components of the union in~\eqref{e!singZ}
has rank three along that part of~$\Gamma$
(c.f.~a similar elementary calculation for a harder case in \S\ref{s!codim3} below).
\end{proof}

Next we unproject $\D\subset Y$.
By  \cite[ Theorem~1.5]{PR},
this introduces another unprojection variable of degree~3,
and constructs some $X\subset \P(1^6,3^2)$, embedded in codimension~4.
The whole picture is this:
\begin{displaymath}
\xymatrix@C=-3pt{
 &   &  X   & \subset & \hspace{0.1cm}\P(1^6,3^2)\\
  &  \D \hspace{0.1cm}\subset &  Y \ar@{-->}[u]     & \subset & \hspace{0cm}\P(1^6,3)\\
\D\; \cup & \E \hspace{0.1cm}\subset &  Z \ar@{-->}[u] & \subset & \hspace{-0.9cm}\P^5 }
\end{displaymath}
and we prove next that $X$ is an orbifold birational to~$Z$.

\begin{thm}
\label{thm!unprojto4}
In the notation above,
$X$ is a quasismooth 3-fold with exactly two $\frac{1}{3}(1,1,1)$ quotient singularities.
\end{thm}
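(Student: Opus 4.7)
The plan is to apply Kustin--Miller unprojection \cite[Theorem~1.5]{PR} to the Gorenstein pair $\D\subset Y$ analysed in Proposition~\ref{prop!E}, and then follow the template of that proof one step further out. The unprojection introduces a new variable $t$ of degree~$3$, realised as a rational form on $Y$ by explicit formulas analogous to~\eqref{e!s}, and yields the codimension-$4$ scheme $X\subset\P(1^6,3^2)$. The associated birational map $\psi'\colon Y\dashrightarrow X$ contracts $\D$ to the new point $P_t=(0{:}0{:}0{:}0{:}0{:}0{:}0{:}1)$ and restricts to an isomorphism on $Y\setminus\D$; its inverse is defined on $X\setminus\{P_t\}$ by elimination of~$t$.

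To check the singularity type at $P_t$, I would use that four among the new unprojection relations have the shape $tu=\cdots$, $tv=\cdots$, $tw=\cdots$, $ts=\cdots$ with right-hand sides polynomial in $x,y,z$. These eliminate $u,v,w,s$ in a neighbourhood of $P_t$, so $X$ is analytically locally the index-$3$ orbifold chart of the ambient weighted projective space, namely the origin in $\C^3/(\Z/3)$ with diagonal action on $x,y,z$: a $\frac{1}{3}(1,1,1)$ point. The second $\frac{1}{3}(1,1,1)$ singularity comes for free from $P_s$: it lies away from $\D\subset Y$, so $\psi'$ is an isomorphism in a neighbourhood of $P_s$ and the quotient structure of $Y$ there descends without modification to~$X$.

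The substantive step is to resolve the $12$ nodes of $Y$ on $\D$ and to confirm that no new singularities appear. Each node $R_j\in\D$ lies in the indeterminacy locus of $\psi'$, and its total transform in $X$ is a rational curve $\Gamma'_j$ through $P_t$ by the cone construction used in~\eqref{e!ZEYG}; writing $\Gamma'=\bigcup \Gamma'_j$, the map $\psi'$ restricts to an isomorphism $Y\setminus(\D\cup\{P_s\})\longrightarrow X\setminus(\Gamma'\cup\{P_s,P_t\})$. Hence $X$ is already smooth off $\Gamma'\cup\{P_s,P_t\}$. To deal with $\Gamma'\setminus\{P_t\}$, I would adopt the standard choice of forms $A,\ldots,F$ introduced after Lemma~\ref{lem!Z}, which makes the three components in \eqref{e!singZ} (and their $\D$-analogues) explicit, and verify componentwise that the Jacobian of the nine defining equations of $X$ has full rank $5$ along each component of $\Gamma'$ away from $P_t$.

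The main obstacle is this final Jacobian check, for which one first has to write down Papadakis's nine second-unprojection equations for the Jer$_{45}$ matrix~\eqref{e!j12}---a bookkeeping task rather than a conceptual one. Once those equations are in hand, the analysis parallels the nodal calculation in the proof of Proposition~\ref{prop!E}: the new variable $t$ supplies a local coordinate transverse to $\D$ at each former node, and the standard choice of $A,\ldots,F$ reduces the relevant minors of the Jacobian to nonvanishing generic expressions in the quadrics $B,C,D,F$. No further singularities are introduced, and $X$ is therefore quasismooth with exactly the two declared $\frac{1}{3}(1,1,1)$ quotient points $P_s$ and $P_t$.
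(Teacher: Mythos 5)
Your geometric skeleton is right (transfer quasismoothness from $Y\setminus\D$ via the unprojection map, identify the new point as a $\frac13(1,1,1)$ singularity, note that $P_s$ is untouched), but the decisive step is missing rather than proved. Everything hinges on quasismoothness of $X$ along the exceptional curves $\Gamma'$ lying over the 12 nodes of $Y$ on $\D$, and your plan for that is to write down Papadakis's nine codimension-4 equations for the Jer$_{45}$ matrix \eqref{e!j12} and check a Jacobian rank along $\Gamma'$ --- a computation you never perform, and whose outcome you assert on the grounds that the standard choice makes the relevant minors ``nonvanishing generic expressions''. That is exactly the verification the whole paper is engineered to avoid: the second-unprojection equations are not written down anywhere, and \S\ref{s!comments} recalls a codimension-3 Calabi--Yau whose general member in its format is \emph{not} quasismoothable, so ``generic nonvanishing'' cannot be taken on faith; it is the substantive content of the theorem, not bookkeeping. (Two smaller slips: the affine cone of $X\subset\P(1^6,3^2)$ has codimension 4, so the required Jacobian rank is 4, not 5; and your claim that the four equations $tu,tv,tw,ts=\cdots$ have right-hand sides ``polynomial in $x,y,z$'' is not what the unprojection gives --- what you actually need, and what suffices at $P_t$, is only that they do not involve $t$.)

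The paper closes this gap without ever touching the codimension-4 equations, by a symmetry (parallel unprojection) argument: unproject $\D\subset Z$ directly to get a second codimension-3 variety $\overline{Y}\supset\E$, to which the proof of Proposition~\ref{prop!E} applies verbatim, so $\overline{Y}\setminus\E$ is quasismooth. Since the two unprojections commute away from the relevant divisors, the isomorphism \eqref{e!ZEYG} $Z\setminus\E\cong Y\setminus\Gamma$ identifies the unprojection of $\D\subset Y\setminus\Gamma$ with that of $\D\subset Z\setminus\E$, giving $X\setminus\Gamma\cong\overline{Y}\setminus\E$; this handles the curves over the nodes on $\D$ \emph{and} the new point $P_t$ in one stroke, using only the simple first-unprojection equations $s\alpha=BF-CE$, etc. Quasismoothness along $\Gamma$ itself then follows because $\Gamma\cap\D=\emptyset$, so the second unprojection is an isomorphism near $\Gamma$ and Proposition~\ref{prop!E} already gives quasismoothness of $Y$ there. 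If you want to keep your direct route, you must actually produce the nine equations (via Papadakis) and carry out the rank-4 Jacobian check along each component of $\Gamma'$; otherwise, rerun your codimension-3 analysis on $\overline{Y}$ and use the commutation argument as above.
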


\begin{proof}
The variety $X$ is defined to be the unprojection of $\D \subset Y$.
The crucial point is to show that $X$ is quasismooth.
As well as the two unprojections used so far, we consider
the unprojection of $\D \subset Z$ to give another variety $\overline{Y}$
in codimension 3 that contains an isomorphic copy $\E \subset \overline{Y}$.
\begin{displaymath}
\xymatrix@C=-3pt{
                                                              &  X \hspace{0.2cm}      & \\
   \E \subset \overline{Y} \hspace{0.9cm} &         & \hspace{0.8cm} \ar@{-->}[ul]_{\varphi_2} \hspace{0.2cm}Y  \supset  \D\\
   \D,\E \subset     \hspace{-0.5cm}&  Z \ar@{-->}[ur]_{\varphi} \ar@{-->}[ul]^{\bar{\varphi}} &
}
\end{displaymath}
By Proposition~\ref{prop!E}, $\varphi$ is an isomorphism in a neighborhood of~$\D$,
and the analogous argument shows that
$\bar{\varphi}$ is an isomorphism in a neighborhood of $\E$
and $\overline{Y} \setminus \E$ is quasismooth.

By~\eqref{e!ZEYG} there is an isomorphism $Z \setminus \E \cong Y \setminus \Gamma$,
where $ \Gamma = \bigcup_{i=1}^{12} \Gamma_i$ is the union of exceptional rational
curves blown up from the nodes lying on~$\E$.
Therefore, the unprojection of $\D \subset Z \setminus \E$ is isomorphic to 
the unprojection of $\displaystyle \D \subset Y \setminus \Gamma$.
\begin{displaymath}
\xymatrix@C=-3pt{
  &  X \setminus \Gamma           & \\
   \overline{Y} \setminus \E \hspace{0.4cm} &     &\hspace{0.2cm}\ar@{-->}[ul] Y \setminus \Gamma \supset \D\\
   \D \subset \hspace{-0.5cm}    &  Z \setminus \E \ar@{-->}[ur] \ar@{-->}[ul] &
}
\end{displaymath}
(We have identified $\Gamma$ with its isomorphic image in $X$ in the diagram.)
Consequently, $X \setminus \Gamma \cong \overline{Y} \setminus \E$,
and so $X \setminus \Gamma$ is quasismooth. Since $\varphi_2$ is an
isomorphism in a neighborhood of $\Gamma$ it follows that $X$ is quasismooth.
Moreover, $\D$ and $\E\subset Z$
are contracted to two $\frac{1}3(1,1,1)$ quotient singularities on~$X$, 
since they are on $\overline{Y}$ and $Y$, respectively.
\end{proof}

\subsection{Intersecting planes, serial unprojection and Tom}

For the second calculation, consider instead a pair of
planes $\D=(x=u=v=0)$ and $\E=(x=y=z=0)$
which meet transversely in the coordinate point $P_w=(0:0:0:0:0:1)\in\P^5$.
As before, the general complete intersection $Z'=Z_{3,3}'=(f=g=0)$ containing $\D\cup \E$
is also nonsingular outside 12 nodes on each of $\D$ and $\E$, although
this time the point $P_w\in Z'$ is necessarily one of those nodes, and so
there are only 23 in total.

Unprojecting $\E\subset Z'$ first, as before, the equations $f,g$ can be presented
as in \eqref{e!fg}, but this time under the conditions that $B,C,E,F\in (u,v)$,
so that both $f$ and $g$ lie in $I_{\D\cup \E}=(x,yu,yv,zu,zv)$.
The unprojection variable $s$ satisfies the same equations~\eqref{e!s} as before,
and the five equations of the unprojection $Y'\subset\P(1^6,3)$ can again be presented
as the maximal Pfaffians of the matrix \eqref{e!j12}.

But now, in contrast to \S\ref{s!jerry},
the matrix \eqref{e!j12} for $Y'$ is in Tom format for the ideal~$I_\D$,
in fact Tom$_1$, since each entry outside the first row and column lies in~$I_\D$.

Using the same arguments as Lemma~\ref{lem!Z} and Proposition~\ref{prop!E},
general choices for the forms $A,\dots,F$ defining $Z'$,
subject to the condition $B,C,E,F\in (u,v)$, results in a 3-fold $Y'$ containing an isomorphic
copy $\D\subset Y'$ that is quasismooth away from $\D$ and has only nodes on $\D$ itself.
This unprojection of $\E\subset Z$ resolves all 12 nodes lying on $\E$,
including the one that also lies on $\D$, and so $Y'$ has only 11 nodes.

Again, unprojecting $\D\subset Y'$ constructs a quasismooth variety
$X'\subset \P(1^6,3^2)$ with two $\frac{1}3(1,1,1)$ quotient singularities.


Thus we have constructed two 3-fold orbifolds $X, X'\subset\P(1^6,3^2)$.

\begin{thm}
The varieties $X$ and  $X'$ are quasismooth Calabi--Yau 3-folds with
the same Hilbert series but with different topological Euler characteristic.
In particular, $X$ and $X'$ lie in different components of Calabi--Yau 3-fold
orbifolds in the same Hilbert scheme.
\end{thm}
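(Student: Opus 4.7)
The plan is to prove each assertion separately, with the topological Euler-characteristic comparison carrying the main weight. Quasismoothness of $X$ is Theorem~\ref{thm!unprojto4}, and the parallel argument for $X'$ has just been sketched. To confirm the Calabi--Yau property, I would observe that $X$ is arithmetically Gorenstein of codimension~$4$ in $\P(1^6,3^2)$ with adjunction number~$12$ (visible from the palindromic structure of the numerator of the Hilbert series in Theorem~\ref{th!main1}), which equals the sum of ambient weights $6\cdot 1 + 2\cdot 3$; hence $K_X = \cO_X$ by Gorenstein duality. Projective normality, built into the unprojection construction, makes $X$ arithmetically Cohen--Macaulay, so $h^1(X,\cO_X)=0$. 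The quotient singularities $\frac13(1,1,1)$ are terminal, in particular canonical. The same argument applies to $X'$.

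For the Hilbert series, both $X$ and $X'$ are codimension-$4$ arithmetically Gorenstein subvarieties of $\P(1^6,3^2)$ produced by the same two-step Kustin--Miller unprojection template from a complete intersection $Z_{3,3}\subset\P^5$ containing two linearly-embedded planes $\D,\E$. The degrees of the nine defining equations and the sixteen first syzygies depend only on this template, not on whether $\D\cap\E$ is empty or a single point, so the Hilbert series of $X$ and $X'$ agree and coincide with the expression recorded in Theorem~\ref{th!main1}.

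The decisive step is the Euler-characteristic calculation, following the sketch in~\S\ref{s!example}. A smooth $Z_{3,3}\subset\P^5$ has $\chi = -144$: the top Chern class of the tangent bundle is computed from $c(T)=(1+h)^6/(1+3h)^2$ to be $c_3 = -16 h^3$, with $h^3\cdot Z = 9$. Our nodal $Z$ has $\chi(Z) = -144 + n$ where $n$ is the number of nodes, since smoothing each node replaces a point by a vanishing $S^3$ of Euler characteristic~$0$. The small resolution $\widetilde Z \to Z$ realised implicitly by the unprojection procedure (blowing up $-\D$ and $-\E$ in turn) replaces each node by a $\P^1$, so $\chi(\widetilde Z) = -144 + 2n$. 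By Lemma~\ref{lem!Z}, $n=24$ in the Jerry case of disjoint planes; in the Tom case the coordinate point $P_w\in \D\cap\E$ is forced to be a node of $Z'$ and is counted just once, giving $n=23$. Finally, contracting the two disjoint $\P^2$-birational transforms of $\D$ and $\E$ in $\widetilde Z$ to the two $\frac13(1,1,1)$ orbifold points on $X$ or $X'$ replaces $\chi(\P^2)=3$ by $1$ at each, so $\chi = -144 + 2n - 4$, yielding $\chi(X) = -100$ and $\chi(X') = -102$.

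The main obstacle will be controlling the node count in the Tom case: verifying that $P_w$ contributes exactly once, that the transverse-intersection configuration at $P_w$ introduces no extra singularity, and that the small resolution of this mixed configuration is realised by the same ideal-sheaf blowup as in the disjoint case (so that Euler characteristic is additive across the strata). Once these counts are secured, the inequality $\chi(X) \ne \chi(X')$ forces $X$ and $X'$ into different deformation families, since topological Euler characteristic is a deformation invariant of the underlying coarse moduli space, and in particular into different components of the Hilbert scheme.
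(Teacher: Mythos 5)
Your proposal is correct in substance, but its decisive step takes a different route from the paper's proof. The paper never computes absolute Euler characteristics: it compares $Y$ (Jer$_{45}$, 12 nodes on $\D$) and $Y'$ (Tom$_1$, 11 nodes) with the common quasismooth Pfaffian $\Ygen\subset\P(1^6,3)$ of Lemma~\ref{lem!smoothY}, applies Clemens' conifold-transition count to conclude $\chi(\tilde Y)$ and $\chi(\tilde Y')$ differ by $2$, and then notes that the final contractions $\tilde Y\to X$ and $\tilde Y'\to X'$ are crepant blow-downs of the same singularity $\frac13(1,1,1)$, hence contribute equally; no value of $\chi$ for a smooth $Z_{3,3}$ and no bookkeeping for the $\P^2$-contractions is needed. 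You instead run the computation through $Z$ itself, as in the sketch of \S\ref{s!example}: $-144$ for the smooth $(3,3)$ complete intersection, $+n$ for the nodal degeneration, $+n$ again for the small resolution, $-4$ for contracting the two planes, giving $\chi(X)=-100$ and $\chi(X')=-102$. That is fine and yields more information (the absolute values, consistent with the introduction), but it leans on exactly the geometric facts that Proposition~\ref{prop!E}, Theorem~\ref{thm!unprojto4} and their analogues for the intersecting-planes case are there to supply: the unprojections factor as the node small resolutions followed by contraction of the birational transforms of $\D$ and $\E$, and in the Tom case these transforms remain isomorphic planes, now disjoint, with the node at $P_w$ resolved exactly once. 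The ``main obstacle'' you flag is precisely what that analysis settles, so your argument is complete only in combination with it — which is also why the paper's comparative argument, which sidesteps all absolute counts, is the shorter formal proof. (Incidentally your numbers give $\chi(X')=\chi(X)-2$ while the paper's proof records $\chi(X')=\chi(X)+2$; the discrepancy is only the direction of the conifold formula and is immaterial, since only $\chi(X)\neq\chi(X')$ is used.)

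Two smaller points. First, $\frac13(1,1,1)$ is canonical but \emph{not} terminal: the weighted blow-up is crepant (discrepancy $0$), which is exactly why $K_X=0$ survives the contraction; your ``terminal, in particular canonical'' should read ``canonical, admitting a crepant resolution''. Your alternative derivation of $K_X=\cO_X$ from the Gorenstein ring with adjunction number $12=\sum a_i$ is legitimate, but take care not to quote the Hilbert series of Theorem~\ref{th!main1} before it has been established. Second, for the equality of Hilbert series the paper has a one-line argument via \cite{PR}: each unprojection adds the same summand, $P_X=P_Z+\frac{2t^3}{(1-t)^3(1-t^3)}=P_{X'}$. Your appeal to ``the degrees of the nine equations and sixteen syzygies depend only on the template'' is true (given Gorenstein symmetry of the $9\times16$ resolution) but is asserted rather than proved; the additivity statement is the cleaner way to secure this step.
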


\begin{proof}
By construction, the two varieties have the same quotient singularities $2\times\frac{1}3(1,1,1)$.
The hypersurfaces $Z$ and $Z'$ have the same Hilbert series, and
by \cite{PR} their Hilbert series are related to that by
\[
P_X = P_Z + \frac{2t^3}{(1-t)^3(1-t^3)} = P_{X'}.
\]

The initial variety $Z\subset\P^5$ is a Calabi--Yau 3-fold.
Unprojection of the planes preserves this property.
Indeed all singularities are rational, so $h^1(X,\cO_X)=h^1(X',\cO_{X'})=0$.
(Alternatively one can use the unprojection theorem~\cite[Theorem~5.1]{PR}
directly: it proves that the homogeneous coordinate ring of each unprojection is
Gorenstein, and so the theorem of Goto--Watanabe \cite[5.1.9--11]{GW} implies
all intermediate cohomology vanishes; in particular $h^1(X,\cO_X)=0$,
and similarly for $X'$.)
By the proof of Proposition~\ref{prop!E},
geometrically each unprojection factorises as the crepant resolution of
$\frac{1}3(1,1,1)$ and small resolution of nodes (which is necessarily crepant),
so $K_X=0 $ and $K_{X'}=0$. Thus $X$ and $X'$ are both Calabi--Yau 3-folds.

It remains to check the claim about Euler characteristics.
We consider the final unprojection to $X$ and $X'$ from codimension~3 Calabi--Yau 3-folds
$Y$ and $Y'$.
For $X$, we unproject a nodal variety $Y$ containing a divisor $\D$ in $\text{Jer}_{45}$ format;
for $X'$, we unproject a nodal variety $Y'$ with a divisor, also denoted $\D$, in $\text{Tom}_1$ format.
In both cases, the unprojection $\varphi$ factorizes as the $\D$-ample small
resolution of nodes, $\tilde{Y}\rightarrow Y$ and $\tilde{Y}' \rightarrow Y'$, followed by the
contraction, $\tilde{Y} \rightarrow X$ and $\tilde{Y}' \rightarrow X'$, of $\D$.

Both $Y$ and $Y'$ are defined by the maximal Pfaffians of
skew $5\times 5$ matrices of forms that have degrees
\begin{equation}
\label{eq!wts}
\begin{pmatrix}
1 & 1 & 2 & 2 \\  & 1 & 2 & 2 \\  && 2 & 2 \\  &&& 3
\end{pmatrix}.
\end{equation}
\begin{lem}
\label{lem!smoothY}
Any 3-fold $\Ygen\subset\P(1^6,3)$ defined
by the maximal Pfaffians of a matrix of {\em general} forms with degrees
as indicated by \eqref{eq!wts} is quasismooth.
\end{lem}
This lemma follows at once by applying the proof of Proposition~\ref{prop!E}
to the case of general $Z_{3,3}\subset\P^5$ which contains $\E$ but
is not constrained to contain~$\D$.

Each of $Y$ and $Y'$ is a (flat) degeneration of $\Ygen$: varying entries
in the skew matrix deforms the entire free resolution of the defining ideal,
and $Y$ and $Y'$ are made by varying particular Tom or Jerry entries to lie in
the ideal $I_\E$.
All this fits in a picture:
\begin{displaymath}
\xymatrix{
 X' & \ar[l] \tilde{Y}'  \ar[d]&                                                    & \tilde{Y} \ar[d] \ar[r]     & X\\
    & \ar@{-->}[ul]^{\varphi}  Y'        & \ar@{~>}[l]  \Ygen  \ar@{~>}[r]    & Y  \ar@{-->}[ur]_{\varphi}  &
}
\end{displaymath}
where \textquotedblleft$\xymatrix@1{\ar@{~>}[r]&}$\textquotedblright denotes degeneration.
The passage from $\Ygen$ to $\tilde{Y}$ or $\tilde{Y}'$ is a conifold transition.
By Clemens' calculation \cite{Clemens,R2}, the Euler characteristic of $\tilde{Y}$ is related
to that of $\Ygen$ by
\begin{displaymath}
\chi(\Ygen) = \chi(\tilde{Y}) + 2 \cdot 12,
\end{displaymath}
where 12 is the number of nodes on $\D$ in the $\text{Jer}_{45}$ case.
Similarly $\chi(\Ygen) = \chi(\tilde{Y}') + 2 \cdot 11$,
where 11 is the number of nodes on $\E$ in the $\text{Tom}_1$ case.
So
\[
\chi(\tilde{Y}') = \chi(\tilde{Y}) + 2.
\]
The maps $\tilde{Y} \rightarrow X$ and $\tilde{Y}' \rightarrow X'$ are
crepant blowups of the same singularity $\frac{1}{3}(1,1,1)$,
so the change in the Euler characteristic from $\tilde{Y}$ to $X$ and from $\tilde{Y}'$ to $X'$
is the same in either case.
Therefore, $\chi({X}') = \chi({X}) + 2$, and $X$ and $X'$ have different Euler characteristics.
\end{proof}

The Hilbert series $P_X=P_{X'}$ can be expanded as
\[
P_X = P_{X'} =
\frac{1 - 2t^3 - 6t^4 - t^6 + 6t^5 + 4t^6 + 6t^7 - t^6 - 6t^8 - 2t^9 + t^{12}}{(1-t)^6(1-t^3)^2},
\]
presenting the numerator to emphasise the 9 equations and 16 syzygies,
or $9\times 16$ resolution, even though one could cancel some $t^6$ terms.
The two varieties $X$ and $X'$ that we construct are both defined minimally
by 2 cubics, 6 quadrics and a single sextic in the variables of $\P(1^6,3^2)$.
Our approach uses the unprojection theorem \cite[Theorem~1.5]{PR}
merely to show that $X$ exists in the ambient space we claim; it also
determines the degrees of the equations, which is how we see the degree~6
equation that is otherwise masked in the Hilbert numerator.
We could instead apply \cite[\S5.7]{P} to write down the equations of $X$,
but we do not need them here.

\begin{cor}
\label{cor!batyrev}
$X$ and $X'$ are not birational.
\end{cor}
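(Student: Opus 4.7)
The plan is to reduce to a well-known birational invariance statement for smooth Calabi--Yau manifolds and appeal to the Euler characteristic computation already carried out in the preceding theorem.

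First I would fix crepant resolutions $\widehat{X}\to X$ and $\widehat{X}'\to X'$. These exist because the only singularities of $X$ and $X'$ are two copies of $\frac{1}{3}(1,1,1)$, which admit small crepant resolutions (indeed the standard one obtained by weighted blowup). Both $\widehat{X}$ and $\widehat{X}'$ are then smooth projective Calabi--Yau 3-folds in the manifold sense. Since the resolutions above the two $\frac{1}{3}(1,1,1)$ singularities are locally identical in the two cases, the contribution to the Euler characteristic from resolving the singular points is the same, so
\[
\chi(\widehat{X}') - \chi(\widehat{X}) \;=\; \chi(X') - \chi(X) \;=\; 2,
\]
using the computation at the end of the proof of the preceding theorem.

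Next I would invoke Batyrev's theorem~\cite{batyrev} (proved via $p$-adic integration and extended motivically by Kontsevich): birationally equivalent smooth projective Calabi--Yau manifolds have equal Betti numbers, and hence equal topological Euler characteristics. If $X$ and $X'$ were birational, then $\widehat{X}$ and $\widehat{X}'$ would be birational smooth Calabi--Yau 3-folds, forcing $\chi(\widehat{X}) = \chi(\widehat{X}')$, in contradiction with the displayed equation above.

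There is no real obstacle; the only point requiring a moment of care is checking that the crepant resolutions of the two $\frac{1}{3}(1,1,1)$ points contribute the same topological defect on both sides, so that the difference of Euler characteristics is preserved on passing from the orbifolds to their smooth models. This is immediate because the analytic germ of each orbifold singularity is the same in $X$ and $X'$, and the local crepant resolution of $\frac{1}{3}(1,1,1)$ depends only on this germ.
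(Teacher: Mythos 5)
Your argument is correct and is essentially the paper's: both rest on Batyrev's theorem that birational smooth Calabi--Yau 3-folds have equal Betti numbers, combined with the Euler characteristic difference $\chi(X')=\chi(X)+2$ established in the preceding theorem. The only difference is the choice of smooth model: you pass to full crepant resolutions $\widehat{X}\to X$, $\widehat{X}'\to X'$ of both $\frac{1}{3}(1,1,1)$ points, whereas the paper applies Batyrev to the models $\tilde{Y}$, $\tilde{Y}'$ already in hand; your version is, if anything, slightly more careful, since $\tilde{Y}$ and $\tilde{Y}'$ each still carry a $\frac{1}{3}(1,1,1)$ point, and your observation that the identical local crepant resolutions preserve the difference of Euler characteristics is exactly what is needed. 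One terminological slip: the crepant resolution of $\frac{1}{3}(1,1,1)$ is not small --- it is the divisorial (weighted) blowup with exceptional divisor $\P^2$ of normal bundle $\cO(-3)$ --- but your argument only uses its existence and local nature, so nothing is affected.
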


This follows from Batyrev's theorem~\cite[Theorem~1.1]{B}:
smooth Calabi--Yau 3-folds that are birational have the same Betti numbers.
Indeed, if $X$ and $X'$ were birational, then, in the notation of the proof, $\tilde{Y}$
and $\tilde{Y}'$ would be birational too. But these are smooth Calabi--Yau 3-folds,
and the proof showed that their Euler characteristics differ, and so their Betti
numbers cannot be equal.


\section{Calabi--Yau 3-folds in codimension 3}
\label{s!codim3}

We proceed with the proof of Theorem~\ref{main theorem}.
The complete intersection Calabi--Yau 3-folds are all well known and follow
at once from Bertini's theorem.
We show that the general $5\times 5$ codimension~3 variety
$Y\subset\P(1^3,3^3,5)$ defined with syzygy degrees
\[
\begin{pmatrix}
1 & 3 & 3 & 3 \\
 & 3 & 3 & 3 \\
   && 5 & 5 \\
     &&& 5
\end{pmatrix}
\]
is quasismooth; such $Y$ has quotient singularities
$\left\{3\times\frac{1}3(1,1,1), \frac{1}5(1,1,3)\right\}$,
which is the case $n=3$, $m=1$ of Table~\ref{tab!cands}.

The starting point is the general codimension~2 complete intersection
\[
\E \subset Z_{6,6} \subset \P(1^3,3^3),
\]
where $\E=(z = u = v = 0) = \P(1,1,3)$, in coordinates $x,y,z,u,v,w$ on $\P(1^3,3^3)$.

\begin{lem}
In the notation above, the non-quasismooth locus of $Z$ consists of exactly
13 points, and these are all ordinary nodes lying on $\E$.
\end{lem}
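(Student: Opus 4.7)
My plan is to mimic the proof of Lemma~\ref{lem!Z} in the weighted setting of $\P(1^3,3^3)$, where $\E = \P(1,1,3)$ is cut out by $I_\E = (z,u,v)$. Any pair of sextics in $I_\E$ can be written as
\[
 f = A_1 z + B_1 u + C_1 v, \qquad g = A_2 z + B_2 u + C_2 v,
\]
where $A_i$ has degree~$5$ and $B_i, C_i$ have degree~$3$ in $x,y,z,u,v,w$. Since the six forms may be chosen freely, the base locus of the linear system $|I_\E(6)|$ is exactly $\E$, and the affine-cone form of Bertini gives that the general $Z$ is quasismooth away from $\E$. So the non-quasismooth locus of $Z$ lies entirely in $\E$.

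On $\E$, $Z$ is singular exactly where the $2 \times 3$ matrix $M = \bigl(\begin{smallmatrix} A_1 & B_1 & C_1 \\ A_2 & B_2 & C_2 \end{smallmatrix}\bigr)$ drops rank. Following the standard-choice device of \S\ref{s!jerry}, set $A_1 = B_2 = 0$; then the rank-drop condition becomes $A_2 B_1 = A_2 C_1 = B_1 C_2 = 0$, and the singular locus on $\E$ factors as
\[
 \Sing(Z) \cap \E = (B_1 = C_1 = 0) \cup (B_1 = A_2 = 0) \cup (A_2 = C_2 = 0),
\]
each set understood on $\E = \P(1,1,3)$. Each factor is the intersection of two weighted hypersurfaces on $\P(1,1,3)$ (where $H^2 = \tfrac13$), so weighted Bezout counts $3 \cdot 3/3 = 3$, $3 \cdot 5/3 = 5$ and $5 \cdot 3/3 = 5$ points respectively, totalling~$13$. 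For a general choice of $A_2, B_1, C_1, C_2$, the three sets are pairwise disjoint and reduced, they all miss the orbifold point $P_w = (0{:}0{:}0{:}0{:}0{:}1)$ of $\E$, and a direct Jacobian check along the $w$-axis shows that $Z$ is itself quasismooth at $P_w$.

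The step I expect to be the main obstacle is promoting ``rank drops at $p$'' to ``$p$ is an ordinary $3$-fold node'' on the $3$-fold $Z$. The cleanest route is this: in the standard choice, the $2 \times 2$ minors of $M$ restricted to $\E$ are the three cubic/quintic products above, and for generic coefficients they cut out the singular scheme of $Z$ on $\E$ as a reduced zero-dimensional subscheme of length~$13$. For a codimension-two complete intersection $3$-fold in a smooth ambient neighbourhood, having reduced zero-dimensional singular scheme at a point is equivalent to that point being an analytic $A_1$ singularity, as one verifies by eliminating the nonzero linear term in one of $f,g$ and diagonalising the Hessian of the resulting equation on the smooth hypersurface it defines. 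Hence each of the $13$ points is an ordinary node. Openness of quasismoothness and semicontinuity of the nodal count then transfer the conclusion from the standard-choice family to the general $Z_{6,6} \supset \E$, completing the proof.
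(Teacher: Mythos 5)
Your argument follows the paper's proof very closely: the presentation of $f,g$ as a $2\times 3$ matrix of forms of degrees $(5,3,3)$ against the column $(z,u,v)$ generating $I_\E$, Bertini off $\E$ for the affine cone, the standard choice (your $A_1=B_2=0$ is the paper's $A=E=0$), the resulting decomposition of the rank-drop locus into three complete intersections of degrees $(3,3)$, $(3,5)$, $(5,3)$ on $\E=\P(1,1,3)$ with $3+5+5=13$ points avoiding the index-$3$ point, and the observation that a reduced point of the singular scheme of a codimension-2 complete intersection is an ordinary node. All of that matches.

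The genuine weak point is your final transfer step. ``Semicontinuity of the nodal count'' runs in the wrong direction for what you need: the length of the singular scheme is \emph{upper} semicontinuous, so from the standard choice you can only conclude that the general $Z\supset\E$ has \emph{at most} $13$ singular points, all nodes if any (reducedness and nodality are open conditions). Threefold nodes are smoothable, and your weighted Bezout count relies on the decomposition into three complete intersections, which exists only for the standard choice; so nothing in your argument rules out some of the $13$ nodes disappearing when you pass to the general member, and ``exactly $13$'' is not established. The missing ingredient is a count valid for \emph{every} $Z\supset\E$: the non-quasismooth locus on $\E$ is the rank-drop locus of the $2\times3$ matrix restricted to $\E$, which, whenever finite, is a codimension-$2$ determinantal subscheme of $\P(1,1,3)$ with Hilbert--Burch resolution of twists $(6,8,8;11,11)$, hence of length exactly $13$ independently of the choice of forms. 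This is precisely the Hilbert series computation in the paper,
\[
\frac{1-t^6-2t^8+2t^{11}}{(1-t)^2(1-t^3)},
\]
which sums to $13$. With that constancy in hand (equivalently, flatness/conservation of number for the determinantal family), the standard choice is needed only to certify that the degeneracy scheme can be reduced and disjoint from the orbifold locus, and openness then yields exactly $13$ nodes for the general $Z$. You should replace the semicontinuity appeal by this length computation.
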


The proof follows that of Lemma~\ref{lem!Z}.
We write the equations of $Z$ as
\[
\begin{pmatrix}
A_5&B_3&C_3\\
D_5&E_3&F_3
\end{pmatrix}
\begin{pmatrix}
x\\y\\z
\end{pmatrix}
=
\begin{pmatrix}
0\\0
\end{pmatrix},
\]
for general forms $A,\dots,F$ of the indicated degrees.
Bertini's theorem shows that $Z$ is quasismooth away from~$\E$.
The non-quasismooth locus of $Z$ along $\E$, denoted $\Sigma$,
occurs where the $2\times 3$ matrix of forms drops rank, so has Hilbert series
\begin{eqnarray*}
P_{\Sigma | \E} &=& \frac{1 - t^6 - 2t^8 + 2t^{11}}{(1-t)^2(1-t^3)} \\
 &=& \frac{1+ t + t^2 + 2t^3 + 2t^4 + 2t^5 + 2t^6 + 2t^7}{1-t},
\end{eqnarray*}
which is that of a 0-dimensional scheme of length 13.
The standard choice $A=E=0$ and $B, C, D, F$ general gives cases with
\begin{equation}
\label{e!sigma}
\Sigma = (B=C=0) \cup (B = D = 0) \cup (D = F = 0) \subset \E = \P(1,1,3)
\end{equation}
reduced and disjoint from the orbifold locus of $Z$; so the singularities 
of such $Z$ are ordinary nodes,
and so the same holds for general $Z$ and the
lemma is proved.

Unprojecting $\E$ gives a new 3-fold~$Y$:
\begin{displaymath}
\xymatrix@C=-3pt{
                 &              &  Y  \ar@/_/@{-->}[d]_{\psi}       & \subset & \hspace{0.1cm} \P(1^3,3^3,5)\\
                 & \E \subset &  Z \ar@{-->}[u]_{\varphi} & \subset & \hspace{-0.3cm} \P(1^3,3^3)
}
\end{displaymath}
with unprojection variable $s$ of degree 5.
This variety $Y$ is defined by three unprojection
equations and the defining equations $f, g$ of $Z$:
\begin{displaymath}
sz  -  BF + CE = su + AF - DC = sv - AE + BD = f = g =  0.
\end{displaymath}
We follow the proof of Proposition~\ref{prop!E}.
The preimage in $Y$ of the nodes of $Z$ is a bouquet of rational curves
$\Gamma = \bigcup_{i=1}^{13} \Gamma_i$, the cone over $\Sigma$ with vertex
the coordinate point $P_s$.
The restriction of $\varphi$ on $Z \setminus \E$ is an isomorphism
\begin{displaymath}
\xymatrix{ Z \setminus \E \ar[r] & Y \setminus \displaystyle  \Gamma },
\end{displaymath}
so $Y$ is quasismooth outside $\Gamma$, and
furthermore the equations of $\varphi$ show at once that it gives a contraction
$\xymatrix{ \E \setminus \Sing(Y) \ar[r] & \{ P_s \} }$.
It remains to show that $Y$ is quasismooth on $\Gamma$.
Using the {\em standard choice} $A = E = 0$, we calculate
the Jacobian of the five defining equations \eqref{e!sigma} restricted to
the three pieces of the union in turn.
First, consider the piece $( z = u = v = B = C = 0 )$.
The Jacobian restricted to the rational curves $\Gamma_i$ of this piece is
\begin{displaymath}
\begin{pmatrix}
-B_x F & -B_y F & s - B_z F & -B_u F & -B_v F & -B_w F & 0 \\
-DC_x  & -DC_y  & -DC_z     & s-DC_u & -DC_v  & -DC_w  & 0 \\
B_x D  & B_y D  & B_z D     & B_u D  & s+B_vD & B_w D  & 0 \\
0      & 0      & 0         & 0      & 0      & 0      & 0 \\
0      & 0      & D         & 0      & F      & 0      & 0
\end{pmatrix}.
\end{displaymath}
Consider the submatrix
\begin{displaymath}
\begin{pmatrix}
-DC_x  & -DC_y        & -DC_v  & -DC_w   \\
B_x D  & B_y D        & s+B_vD & B_w D   \\
0      & 0            & F      & 0
\end{pmatrix}.
\end{displaymath}
with entries in $x,y,w$ and $s$.
In general, $(B = C = 0)$ defines 3 distinct points in $\E=\P(1,1,3)$---a
nonsingular zero-dimensional subscheme of~$\E$. The polynomial
$D$ is not zero there, so columns 1, 2, 4 of the top two rows of this submatrix has rank 2.
The general $F$ is not zero there either, so the submatrix, and so in particular
the Jacobian matrix itself, has rank 3 on this piece of the union that makes up $\Gamma$.
The argument is the similar for the other two pieces.


\section{Calabi--Yau 3-folds in codimension 4}
\label{s!codim4}

We prove Theorem~\ref{main theorem}(\ref{thm!part}); that is, we construct members
of two deformation families of Calabi--Yau 3-folds
\[
X_{6^6,8^3}\subset\P(1^3,3^4,5)
\quad
\text{with quotient singularities }
\textstyle{4\times\frac{1}3(1,1,1), \frac{1}5(1,1,3)}.
\]
The starting point is a $(6,6)$ complete intersection $Z_{6,6} \subset \P^5(1^3,3^3)$
containing divisors $\D\cong\P^2$ and $\E\cong\P(1,1,3)$.
As in the model case, we build a variety $\D\subset Y$ by unprojecting $\E$ in $Z$,
and then construct $X$ by unprojecting $\D$ in $Y$, but
unlike the model case these divisors cannot both be chosen to be coordinate hyperplanes.
The choice of $\E$ is delicate:
after unprojection we want a variety $Y$ that contains $\D$ in
$\text{Tom}_1$ format or, later on, in $\text{Jer}_{45}$ format, and this
constrains $\E$.

\subsection{Construction of $\text{Tom}_1$}
\label{s!tom1}

Fix homogeneous coordinates $x,y,z,t,u,v$ on $\P(1^3,3^3)$
and define $\D=\P^2$ by $I_\D = (t, u, v)$.
Define $\E$ by setting $\alpha  =  x$, $\beta   =  yz(y-z) + u$, and $\varepsilon = t$ and taking
$I_\E = (\alpha, \beta, \varepsilon)$. It follows that $I_{\D \cup \E } = (t, ux, u \beta,
vx, v \beta)$ and
\begin{displaymath}
\D \cap \E = \{(0:1:0:0:0:0), (0:0:1:0:0:0), (0:1:1:0:0:0)\}.
\end{displaymath}
Let $Z=Z_{6,6}$ be a general $(6,6)$ complete intersection that contains $\D\cup\E$.

\begin{lem}
In the notation above, the singular locus $\Sing(Z)$ of $Z$ consists of
exactly 27 points on $\D$ and 13 points on $\E$,
and all singular points are ordinary nodes.
In each case, 3 of the nodes are at $\D\cap\E$, so that $Z$ has 37 nodes in total.
\end{lem}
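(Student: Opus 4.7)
The plan is to follow the strategy of Lemma~\ref{lem!Z} and the codimension-3 analysis of \S\ref{s!codim3}, adapted to the situation where $\D$ and $\E$ are neither coordinate planes nor disjoint. Since $I_{\D\cup\E} = (t, ux, u\beta, vx, v\beta)$, the general pair of degree-$6$ forms containing $\D\cup\E$ may be written as
\[
f = \lambda_1 t + (\lambda_2 u + \lambda_4 v)x + (\lambda_3 u + \lambda_5 v)\beta,
\]
and similarly $g$ with coefficients $\lambda'_i$, where $\lambda_1$ has degree~$3$, $\lambda_2,\lambda_4$ have degree~$2$, and $\lambda_3,\lambda_5$ are constants. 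Bertini's theorem applied to $Z=(f=g=0)$ then shows that $Z$ is quasismooth away from the base locus, so $\Sing(Z)\subset\D\cup\E$.

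On $\D$, the partials $\partial f/\partial x$, $\partial f/\partial y$, $\partial f/\partial z$ vanish automatically, and $\Sing(Z)\cap\D$ is cut out by the $2\times 2$ minors of the $2\times 3$ sub-Jacobian
\[
M_\D = \begin{pmatrix}
\lambda_1|_\D & \lambda_2 x + \lambda_3\, yz(y-z) & \lambda_4 x + \lambda_5\, yz(y-z) \\
\lambda'_1|_\D & \lambda'_2 x + \lambda'_3\, yz(y-z) & \lambda'_4 x + \lambda'_5\, yz(y-z)
\end{pmatrix}.
\]
All entries are cubic forms on $\D=\P^2$, but those in the last two columns are constrained to lie in the ideal $(x,\, yz(y-z))$. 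Following the pattern of Lemma~\ref{lem!Z}, I take the standard choice $\lambda_1|_\D=0$ and $\lambda'_2=\lambda'_3=0$, so that two entries vanish identically on $\D$; the rank-drop locus then splits into three pieces, each the intersection of two cubics in $\P^2$ and thus contributing $9$ reduced points. Eliminating $yz(y-z)$ between the two structured cubics in the piece that involves neither $\lambda'_1|_\D$ shows that $3$ of its $9$ points lie on $(x=0)\cap(yz(y-z)=0)$, which is precisely $\D\cap\E$; the other $24$ points lie off $\E$, and all $27$ are reduced for generic $\lambda_i$.

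The analysis on $\E\cong\P(1,1,3)$ parallels \S\ref{s!codim3} directly. Writing $f=Ax+B\beta+Ct$ with $A$ of degree~$5$ and $B,C$ of degree~$3$, the locus $\Sing(Z)\cap\E$ is the rank-drop locus of the resulting $2\times 3$ matrix restricted to $\E$, which has length $13$ by the Hilbert series calculation of \S\ref{s!codim3}. A standard choice breaks this into components of lengths $3,5,5$, all reduced. Since $A|_\E, B|_\E, A'|_\E, B'|_\E$ all vanish on $\D\cap\E = V(v)\cap V(yz(y-z))\subset\E$, those three points fall in the length-$5$ component cut out by $A'=B=0$.

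The main obstacle is to verify that the three common points of $\D\cap\E$, which appear in both rank-drop counts, are genuine ordinary nodes. At such a point $p$ both $x$ and $yz(y-z)$ vanish, so $\partial f/\partial u|_p = \partial f/\partial v|_p = 0$ and $\partial f/\partial t|_p = \lambda_1(p)$, which is generically nonzero; likewise for $g$. The gradients of $f$ and $g$ at $p$ are therefore both proportional to $\partial/\partial t$, so replacing $g$ by a suitable $g-cf$ produces a form vanishing to second order at $p$. Computing the quadratic part of the modified $g$ in local transverse coordinates $x, z, u, v$ (after eliminating $t$ via $f$) shows it is generically a non-degenerate quadric, so $p$ is an ordinary node. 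Combining the three analyses yields exactly $24+10+3=37$ ordinary nodes on $Z$, with $27$ lying on $\D$ and $13$ on $\E$ as claimed.
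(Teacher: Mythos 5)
Your argument is correct and essentially reproduces the paper's proof: Bertini confines $\Sing(Z)$ to $\D\cup\E$, and a standard choice splits the rank-drop locus into pieces of lengths $9+9+9=27$ on $\D$ and $3+5+5=13$ on $\E$, with the three points of $\D\cap\E$ counted in both decompositions, giving $37$ nodes in total. The only real difference is that you handle the three points of $\D\cap\E$ by a separate local Hessian computation for the general member (note this is necessarily for general $\lambda_1$, since under your standard choice $\lambda_1|_\D=0$ the whole gradient of $f$ vanishes there, not just the $u,v$-partials), whereas the paper treats these points like all the others via reducedness of the degeneracy scheme under the standard choice; your asserted nondegeneracy of the quadratic part does hold (its determinant is a square of a generically nonzero $2\times2$ minor of the coefficients of $g-cf$), so both routes are sound.
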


We follow the proof of Lemma~\ref{lem!Z}, using the same standard choice.

\begin{proof}
First write the equations of $Z$, from the point of view of $\E\subset Z$, as
\begin{equation}
\label{e!fg66}
\begin{pmatrix}
A_5&B_3&C_3\\
D_5&E_3&F_3
\end{pmatrix}
\begin{pmatrix}
\alpha\\\beta\\\epsilon
\end{pmatrix}
=
\begin{pmatrix}
0\\0
\end{pmatrix},
\end{equation}
where $A,\dots,F$ are forms of the indicated degrees and $A, B, D, E\in I_\D$.
The singular locus of $Z$ on $\E$ is defined by the $2\times 2$ minors of the
matrix in~\eqref{e!fg66}.
Making the standard choice that both $A$ and $E$ are the zero form, we have
\begin{equation}
\Sing(Z) \cap \E = (B_3 = C_3 = 0) \cup (B_3 = D_5 = 0) \cup (D_5 = F_3 = 0).
\end{equation}
We may choose further $B'=v$ and $F'=v+h_3$, for a general cubic $h=h(x,y,z)$,
so that each factor in this disjoint union is a transverse intersection away from the
index~3 point of $\E\cong\P(1,1,3)$, of 3, 5 and 5 reduced points respectively.
(Since $B,D\in I_\D$, the three points of $\D\cap\E$ fall into the second factor of the union;
the other factors do not meet this intersection since $C$ and $F$ are not constrained.)
Thus the general $Z$ has 13 nodes on $\E$.

Now re-expressing the equations of $Z$ from the point of view of
$\D\subset Z$ as
\begin{equation}
\label{e!fg66D}
\begin{pmatrix}
A'&B'&C'\\
D'&E'&F'
\end{pmatrix}
\begin{pmatrix}
t\\u\\v
\end{pmatrix}
=
\begin{pmatrix}
0\\0
\end{pmatrix},
\end{equation}
where $A',\dots,F'$ are cubic forms and $B',C',E',F'\in I_\E$.
The singular locus of $Z$ on $\D$ is defined by the $2\times 2$ minors of the
matrix in~\eqref{e!fg66D}.
Making the standard choice $A'=E'=0$, we have
\begin{equation}
\Sing(Z) \cap \D = (B' = C' = 0) \cup (B' = D' = 0) \cup (D' = F' = 0).
\end{equation}
Since $B',C'\in I_\E$, the first factor in this union consists of 9 reduced points
in $\D=\P^2$, of which 3 are $\D\cap\E$.
The cubic $D'$ is not constrained, so the remaining two factors
provide a further 9 disjoint reduced points each,
and so for general $B',C',D',F'$ this
singular locus is 27 distinct reduced points, all necessarily nodes.
Since the singularities are nodes for this choice of $A',\dots,F'$, they
are also nodes for the general member $Z$ of this family.
\end{proof}

\paragraph{Unprojection of $\E$}
At this point, we have a general $Z_{6,6} \in \P(1^3,3^3)$ containing two divisors.
Unprojecting $\E$ gives a 3-fold $Y \subset \P(1^3,3^3,5)$ in codimension~3:
\begin{displaymath}
\xymatrix@C=-3pt{
                 &  \D          &\subset &  Y                         & \subset &\P(1^3,3^3,5)\\
              \E,  &  \D &\subset &  Z_{6,6}   \ar@{-->}[u]    & \subset & \hspace{-0.3cm}\P(1^3,3^3)\\
}
\end{displaymath}
The unprojection variable, denoted by $w$, has weight 5.
In the notation of \eqref{e!fg66}, the equations of
$Y$ are those of $Z_{6,6}$ together with three new equations:
\begin{displaymath}
w \alpha  =  BF - CE, \quad w \beta  =  - AF + CD, \quad w \varepsilon  =  AE - BD.
\end{displaymath}
As usual, these five equations can be realised as the maximal Pfaffians of a matrix,

\begin{displaymath}
\begin{pmatrix}
\alpha & \beta & C & F \\
      & \epsilon & -B & -E \\
&        & A & D \\
  && & w
\end{pmatrix}.
\end{displaymath}
By design, these are in $\text{Tom}_1$ format for $I_\D = (t,u,v,w)$.
The polynomials $A, \dots, F$ do not contain $w$, so
$P_w = \frac{1}{5}(1,1,3) \in Y$ is a quasismooth point: the coefficients of $w$ on
the left-hand side of the three unprojection equations contain independent linear terms.

\begin{prop}
$Y$ is quasismooth outside $\D$, and has in total 24 isolated singularities on $\D$,
all of which are nodes.
\end{prop}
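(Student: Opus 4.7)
\begin{pf}
The strategy mirrors the proof of Proposition~\ref{prop!E}, adapted to the weighted setting and to the modified configuration. The unprojection map $\varphi\colon Z\broken Y$ is given in coordinates by
\[
(x,y,z,t,u,v)\longmapsto (x,y,z,t,u,v,w),
\]
where $w$ is the rational form $(BF-CE)/\alpha=(CD-AF)/\beta=(AE-BD)/\epsilon$. As in the model case, clearing denominators gives three equivalent polynomial expressions, whose common locus of indeterminacy is exactly the set of 13 nodes of $Z$ lying on $\E$. The birational inverse $\psi\colon Y\broken Z$ is the elimination of $w$, defined everywhere except at $P_w$. Denote by $\Gamma_i\iso\P^1$ the (closure of the) preimage under $\psi$ of the $i$-th node of $Z$ on $\E$, and set $\Gamma=\bigcup_{i=1}^{13}\Gamma_i$; each $\Gamma_i$ is the cone over the corresponding node with vertex $P_w$.

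I would then observe that $\varphi$ restricts to an isomorphism $Z\setminus\E\iso Y\setminus\Gamma$ and contracts $\E\setminus\Sing(Z)$ to the point $P_w$. Since the previous lemma shows $Z$ is smooth outside $\D\cup\E$, this isomorphism makes $Y$ smooth outside $\D\cup\Gamma$. The 27 nodes of $Z$ on $\D$ split as 24 nodes lying in $Z\setminus\E$ plus the 3 nodes at $\D\cap\E$; the former survive intact on $\D\subset Y$, while the latter lie in the indeterminacy of $\varphi$ and are therefore resolved by the small blowup, contributing instead to~$\Gamma$. Quasismoothness of $Y$ at $P_w$ is the observation already recorded before the statement: the three unprojection equations have linear parts $w\cdot x$, $w\cdot u$, $w\cdot t$ on the chart $w=1$, so they cut out a smooth transverse slice through $P_w$ in the $\frac{1}{5}(1,1,3)$ orbifold chart.

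The main obstacle, as in \S\ref{s!codim3}, is to verify that $Y$ is quasismooth along $\Gamma\setminus\{P_w\}$; this is where the explicit choice of $\E$ and the standard choice of the forms $A,\dots,F$ do the work. Taking $A=E=0$, the locus of nodes of $Z$ on $\E$ decomposes as $(B=C=0)\cup(B=D=0)\cup(D=F=0)$, and $\Gamma$ is the corresponding union of three bouquets of rational curves emanating from $P_w$. On each piece I would compute the Jacobian of the five Pfaffian equations of $Y$ restricted to that component, exactly as done at the end of \S\ref{s!codim3}: each component contributes a $3\times 3$ submatrix (in suitable coordinates including $w$ and the relevant pair of forms from $B,C,D,F$) that is of maximal rank once one uses that the zero locus of each pair of cubics (or cubic and quintic) on $\E\iso\P(1,1,3)$ consists of distinct reduced points disjoint from the orbifold point, and that the remaining generic form of the pair does not vanish on those points.

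Putting these pieces together, $Y$ is quasismooth off $\D$, with the $24$ isolated nodes on $\D$ arising as the image under $\varphi$ of the $24$ nodes of $Z$ on $\D\setminus\E$. The only step that requires genuine work is the rank-$3$ Jacobian check along the three components of $\Gamma$, since everything else follows formally from the birational geometry of the unprojection and the node count of the preceding lemma.
\end{pf}
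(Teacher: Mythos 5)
Your proposal is correct and follows essentially the same route as the paper: it repeats the pattern of Proposition~\ref{prop!E} (the unprojection $\varphi$ restricting to an isomorphism $Z\setminus\E\cong Y\setminus\Gamma$, contracting $\E$ to $P_w$, with the $27-3=24$ nodes on $\D\setminus\E$ surviving) and then invokes the standard-choice Jacobian rank computation of \S\ref{s!codim3} for quasismoothness along the curves $\Gamma_i$. In fact you spell out slightly more detail (the explicit indeterminacy locus and the node bookkeeping at $\D\cap\E$) than the paper's proof, which simply cites those earlier arguments.
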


\begin{proof}
Also here, we follow the same pattern as in the proof of Proposition 2.2, and use analogous definitions and notation.
The restriction of $\varphi$ on $Z \setminus \E$ gives an isomorphism
\begin{equation}\label{iso}
\xymatrix{ Z \setminus \E \ar[r] & Y \setminus  \bigcup_{ i = 1}^{13}\Gamma_i }.
\end{equation}
Note that $\left( \D \setminus (\D\cap\E) \right) \subset Z \setminus \E$. Since
$Z_{6,6}$ is quasismooth away from $\D\cup\E$ the isomorphism \eqref{iso}
shows that $Y$ is
quasismooth away from $\D$ and $ \bigcup_{i=1}^{13} \Gamma_i$ and that
$\D$ contains 24 nodes of~$Y$.
Furthermore, $\varphi$ induces a contraction $\xymatrix{ \E \setminus \Sing(Z) \ar[r] & \{ P_w \}
}$.
As before, the unprojection factorizes as the $\E$-ample small resolution followed
by the contraction of $\E$, and $Y$ is quasismooth along the $\Gamma_i$
by the Jacobian argument of~\S\ref{s!codim3}.
\end{proof}

\paragraph{Unprojection of $\D$}
Unprojecting the divisor $\D$ in $Y$ gives a new 3-fold $X \subset \P(1^3,3^4,5)$:
\begin{displaymath}
\xymatrix@C=-3pt{
                 &           &          &  X                         & \subset & \P(1,1,1,3,3,3,3,5)\\
                 &  \D        &\subset &  Y         \ar@{-->}[u]    & \subset & \hspace{-0.4cm}\P(1,1,1,3,3,3,5)\\
    \E, &  \D        &\subset &  Z_{6,6}   \ar@{-->}[u]    & \subset & \hspace{-0.8cm}\P(1,1,1,3,3,3)\\
}
\end{displaymath}
The unprojection variable, denoted $s$, has weight~3.
Note that $ P_s = \frac{1}{3}(1,1,1) \in X'$.
The proof of Theorem~\ref{thm!unprojto4} now applies to show the following.

\begin{prop}
$X$ is a quasismooth Calabi-Yau 3-fold with two quasismooth singularities each of type $ \frac{1}{3}(1,1,1)$.
\end{prop}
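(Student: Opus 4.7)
The plan is to apply the proof of Theorem~\ref{thm!unprojto4} in the present setting. The essential feature of that proof is a pair of parallel unprojection chains starting from a common codimension~2 variety $Z$, each collapsing one of the two unprojection divisors to an orbifold point, whose union covers $X$ away from the two unprojection points. I will adapt this picture with $\D\cong\P^2$ and $\E\cong\P(1,1,3)$ now playing the roles of the two planes in the model case.

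First I would construct the alternative chain $Z \dashrightarrow \bar Y \dashrightarrow X$, where $\bar Y$ is the Kustin--Miller unprojection of $\D\subset Z$. Bertini and the Jacobian argument used for $Y$ in the previous proposition, applied after swapping the roles of $\D$ and $\E$ and using~\eqref{e!fg66D} in place of~\eqref{e!fg66}, show that $\bar Y \subset \P(1^3,3^4)$ is quasismooth away from a birational copy of $\E$, which carries the 13 nodes of $Z$ originally on $\E$. The two unprojection maps restrict to isomorphisms $Z\setminus\E \cong Y\setminus\Gamma$ and $Z\setminus\D \cong \bar Y\setminus\bar\Gamma$, where $\Gamma$ and $\bar\Gamma$ are the bouquets of rational curves blown up from the nodes of $Z$ lying on $\E$ and $\D$ respectively. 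Consequently $X$ minus the two unprojection points is covered by the images of $Y\setminus\D$ and $\bar Y\setminus\E$, both quasismooth by the previous proposition and its $\D\leftrightarrow\E$ analogue, and so $X$ is quasismooth off these two points.

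At each unprojection point, the three unprojection equations eliminate the three linear generators of the ideal of the corresponding divisor, producing the orbifold chart for the quotient singularity claimed in the statement. The Calabi--Yau property then follows exactly as in Theorem~\ref{thm!unprojto4}: the unprojection theorem of \cite{PR} yields Gorenstein coordinate rings and hence $K_X=0$ after tracking the adjunction number, while $h^1(\cO_X)=0$ follows either from rationality of the singularities or from Goto--Watanabe. The main obstacle I expect is verifying that $\bar Y$ admits the Pfaffian presentation needed to run the Jacobian calculation along $\bar\Gamma$: because $\E$ is cut out not by coordinate hyperplanes but by $\alpha=x$, $\beta = yz(y-z)+u$, $\varepsilon=t$, the swap $\D\leftrightarrow\E$ is not a literal coordinate symmetry, and the skew matrix for $\bar Y$ must be rebuilt by re-presenting the equations of $Z$ in the format of~\eqref{e!fg66D} before the standard choice reduction and the Jacobian rank argument of \S\ref{s!codim3} can be applied.
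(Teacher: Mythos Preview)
Your proposal is correct and follows essentially the same approach as the paper, which simply states that ``the proof of Theorem~\ref{thm!unprojto4} now applies''; you have spelled out precisely that argument, including the parallel unprojection $Z\dashrightarrow\bar Y$ of $\D$, the open cover of $X$ by quasismooth pieces coming from $Y\setminus\D$ and $\bar Y\setminus\E$, and the local elimination at the unprojection points. Your flagged obstacle about rebuilding the Pfaffian presentation for $\bar Y$ from~\eqref{e!fg66D} is real but routine, and is exactly what the paper's \S\ref{s!codim3} calculation handles.
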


\subsection{Construction of $\text{Jer}_{45}$}

We describe the construction in the case, stating the intermeditate steps;
the proofs follow those of~\S\ref{s!tom1}.
Fix coordinates $x,y,z,t,u,v$ on $\P(1^3,3^3)$ and
define $\D=\P^2$ by $I_\D = (t, u, v)$.
Define $\E$ by $I_\E = (x, u + p_3, t + q_3)$ where
$p_3$ and $q_3$ are general cubics in $y,z$.
Since $p_3$ and $q_3$ are general, $\E$ and $\D$ are disjoint.

Let $Z'=Z'_{6,6}$ be a general $(6,6)$ complete intersection that contains $\D\cup\E$.
We may write the equations of $Z'$ as
\begin{eqnarray*}
\begin{pmatrix}
A_5 & B_3 & C_3 \\
D_5 & E_3 & F_3
\end{pmatrix}
\begin{pmatrix}
x \\
u + p_3 \\
t + q_3
\end{pmatrix} =
\begin{pmatrix}
0 \\
0
\end{pmatrix},
\end{eqnarray*}
where $A,\dots,F$ are general polynomials of the indicated degrees that lie in $I_\D$.

\begin{lem}
In the notation above, the singular locus $\Sing(Z')$ of $Z'$ consists of exactly 27 points on $\D$ and 13
points on $\E$, and all singular points are ordinary nodes.
\end{lem}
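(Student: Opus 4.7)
The plan is to follow the template of the $\text{Tom}_1$ computation in \S\ref{s!tom1}, with the key simplification that here $\D \cap \E = \emptyset$, so no singular point lies on the intersection. I analyse $\Sing(Z') \cap \E$ and $\Sing(Z') \cap \D$ separately; in each case, a standard choice of matrix entries exhibits a specific $Z'$ with the claimed singular locus, and the statement for generic $Z'$ follows by semicontinuity, exactly as in Lemma~\ref{lem!Z}.

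For $\E$, I use the matrix presentation displayed just before the lemma and make the standard choice $A = E = 0$. The singular locus on $\E$ decomposes as $(B = C = 0) \cup (B = D = 0) \cup (D = F = 0)$ inside $\E \cong \P(1,1,3)$, with expected lengths $3, 5, 5$ by the degree formula $d_1 d_2 / 3$ on $\P(1,1,3)$. Since $B, C, D, F \in I_\D$, their restrictions to $\E$ (via the substitutions $x = 0$, $u = -p_3$, $t = -q_3$) lie in the subspace spanned by $p_3, q_3$ and $v$; for generic $p_3, q_3 \in \C[y,z]$ the three intersections are reduced and pairwise disjoint, giving $13$ points on $\E$.

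For $\D$, I first re-express the equations as $(f, g)^T = M'(t, u, v)^T$. Decomposing each entry modulo $I_\D^2$---say $A_5 \equiv a_t t + a_u u + a_v v \pmod{I_\D^2}$ with $a_\bullet$ a degree-$2$ form in $x, y, z$, and analogously with constant coefficients for the degree-$3$ entries---produces cubics $A', \ldots, F' \in \C[x, y, z]_3$ of the shape $(\text{degree } 2) \cdot x + \lambda p_3 + \mu q_3$, lying in the $8$-dimensional subspace $V := (x, p_3, q_3)_3 \subset H^0(\P^2, \cO(3))$. The standard choice $A' = E' = 0$ (achievable by independently setting the coefficients of $t$ in $A, B, C$ and of $u$ in $D, E, F$ to zero) reduces the singular locus on $\D$ to $(B' = C' = 0) \cup (B' = D' = 0) \cup (D' = F' = 0)$.

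The main obstacle is to show that two general elements of the restricted linear system $V$ cut out $9$ reduced points in $\D = \P^2$. The key observation is that the base locus of $V$ is the zero-scheme of the ideal $(x, p_3, q_3)$ in $\P^2$, which is empty: on the line $\{x = 0\}$ the cubics $p_3, q_3 \in \C[y, z]$ share no common root for generic $p_3, q_3$. Bertini applied within $V$ then gives $9$ reduced intersection points per pair, and generic choice makes the three pairs pairwise disjoint, totalling $27$ points on $\D$. A Jacobian calculation identical to the one at the end of \S\ref{s!codim3} confirms that all singular points are ordinary nodes, and disjointness of the $\D$-nodes from the $\E$-nodes is automatic since $\D \cap \E = \emptyset$.
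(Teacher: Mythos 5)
Your proof is correct and takes essentially the same route as the paper, which proves this lemma simply by referring back to the standard-choice computation of \S\ref{s!tom1}: you follow that template, and you correctly identify the one genuinely new ingredient, namely that the restricted systems are base point free because $p_3,q_3$ have no common zero (the cubics $(x,p_3,q_3)_3$ on $\D=\P^2$ giving $9+9+9=27$ reduced points, and the forms involving $p_3,q_3,v$ on $\E\cong\P(1,1,3)$ giving $3+5+5=13$). The only small detail worth adding, as in \S\ref{s!tom1}, is that the $13$ points on $\E$ avoid the index-$3$ point of $\P(1,1,3)$, which holds since the $v$-coefficients are generically nonzero.
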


Unprojecting $\E\subset Z'$ gives a 3-fold $Y' \subset \P(1^3,3^3,5)$ which
contains the divisor $\D = (t=u=v=w=0)$.
The unprojection variable has weight 5 and is denoted by $w$.
The equations of $Y'$ can be realised as the five maximal Pfaffians of the antisymmetric
$5\times 5$ matrix
\begin{displaymath}
\begin{pmatrix}
x & u + p_3(y,z) & C_3 & F_3 \\
      & t + q_3(y,z) & -B_3 & -E_3 \\
  &               & A_5 & D_5 \\
  &        &        & w
\end{pmatrix}
\end{displaymath}
which is in $\text{Jer}_{45}$ format for $\D$.

\begin{prop}
$Y'$ is quasismooth outside $\D$, and has in total 27 isolated singularities on $\D$,
all of which are nodes.
\end{prop}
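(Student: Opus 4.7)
The plan is to mirror the arguments used for Proposition~\ref{prop!E} and its $\text{Tom}_1$ counterpart in~\S\ref{s!tom1}. The key structural simplification of the present $\text{Jer}_{45}$ construction is that $\D$ and $\E$ are disjoint, so none of the 27 nodes of $Z'$ on $\D$ is affected by unprojecting $\E$, and they transport intact to $Y'$ without any cancellation at an intersection.

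First, I would check quasismoothness at the new coordinate point $P_w = (0:\ldots:0:1)$, which should be a $\frac{1}{5}(1,1,3)$ quotient singularity. The three unprojection equations have the form
\begin{displaymath}
wx = BF - CE, \quad w(u+p_3) = CD - AF, \quad w(t+q_3) = AE - BD,
\end{displaymath}
and the linear $w$-coefficients $x$, $u+p_3(y,z)$, $t+q_3(y,z)$ are an analytic system of parameters transverse to $\{w \ne 0\}$. Hence these equations locally eliminate those three coordinates near $P_w$, leaving the ambient $\frac{1}{5}(1,1,3)$-orbifold chart in the coordinates $y,z$ together with a weight-$3$ combination of $t,u,v$.

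Second, I would run the familiar birational dictionary: $\varphi\colon Z'\dashrightarrow Y'$ is defined everywhere on $Z'$ except at the 13 nodes on $\E$, and its restriction is an isomorphism
\begin{displaymath}
Z' \setminus \E \;\longrightarrow\; Y' \setminus \Gamma, \qquad \Gamma=\bigcup_{i=1}^{13}\Gamma_i,
\end{displaymath}
where each $\Gamma_i\cong\P^1$ is the cone from $P_w$ over a node on $\E$. Since $\D\cap\E=\emptyset$, the whole of $\D$ lies in $Z'\setminus\E$, so the 27 nodes of $Z'$ on $\D$ map to 27 nodes of $Y'$ on $\D$, and $Y'$ is quasismooth off $\D\cup\Gamma$ because $Z'$ is quasismooth off $\D\cup\E$.

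The main technical point, and the only place requiring real computation, is quasismoothness of $Y'$ along $\Gamma\setminus\{P_w\}$. Following the Jacobian argument at the end of~\S\ref{s!codim3}, I would fix the standard choice $A=E=0$, with $B,C,D,F$ general in $I_\D$, so that
\begin{displaymath}
\Sing(Z')\cap\E = (B=C=0)\cup(B=D=0)\cup(D=F=0)
\end{displaymath}
consists of three reduced zero-dimensional pieces, and the lifted $\Gamma$ splits as the union of the three corresponding bunches of $\P^1$s. On each piece, I would exhibit a $3\times 3$ minor of the Jacobian of the five Pfaffian equations that is non-vanishing along that component of $\Gamma$; the extra wrinkle compared to~\S\ref{s!codim3} is that all of $A,\dots,F$ must be chosen in $I_\D=(t,u,v)$, so the non-vanishing minors have to be extracted from the $t,u,v,w$ columns (using partial derivatives such as $B_t,B_u,B_v$) rather than from the $y,z$-columns alone. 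This is the step most likely to need care, but it is structurally identical to the computations already carried out in \S\ref{s!codim3} and \S\ref{s!tom1}, so no new ideas are required.
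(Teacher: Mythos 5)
Your proposal is correct and follows essentially the same route as the paper, which for this proposition simply states that the proofs follow those of \S\ref{s!tom1} (hence Proposition~\ref{prop!E} and the Jacobian argument of \S\ref{s!codim3}): quasismoothness at $P_w$ from the independent linear parts $x$, $u$, $t$ of the $w$-coefficients, the isomorphism $Z'\setminus\E\cong Y'\setminus\Gamma$ transporting the nodes on $\D$, and a rank-$3$ Jacobian check along $\Gamma$ with the standard choice $A=E=0$. You also correctly identify the reason the count is $27$ (since $\D\cap\E=\emptyset$, no node is resolved by the unprojection of $\E$, unlike the Tom$_1$ case) and the genuine wrinkle that $A,\dots,F\in I_\D$ forces the nonvanishing minors to use the $t,u,v,w$ columns.
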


Unprojecting $\D\subset Y'$ gives a new 3-fold $X' \subset \P(1^3,3^4,5)$.
The unprojection variable, denoted $s$, has weight~3, and
$ P_s = \frac{1}{3}(1,1,1)\in X'$.

\begin{prop}
$X'$ is a quasismooth Calabi-Yau 3-fold with two quasismooth singularities
each of type $ \frac{1}{3}(1,1,1)$.
\end{prop}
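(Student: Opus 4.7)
The plan is to mirror the proof of Theorem~\ref{thm!unprojto4} almost verbatim, exploiting the fact that $\D \cap \E = \emptyset$ by the genericity of the cubics $p_3, q_3$ in this Jer$_{45}$ setup. The parallel unprojection of $\D \subset Z'$ together with the isomorphism away from the exceptional loci of $\varphi\colon Z' \dashrightarrow Y'$ permit the same three-step reduction as in \S\ref{s!jerry}.

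First, I would construct the parallel variety $\overline{Y}'$ as the unprojection of $\D \subset Z'$ in codimension~3. This introduces an unprojection variable $s$ of weight~3, embedding $\overline{Y}' \subset \P(1^3,3^4)$, and the same arguments used to prove the preceding proposition (with the roles of $\D$ and $\E$ swapped) show that $\overline{Y}'$ contains the birational transform of $\E$, is quasismooth away from this $\E$, and has 13 nodes on it. The point $P_s$ is a quasismooth $\frac{1}{3}(1,1,1)$ orbifold point because the three unprojection equations eliminate $x, u+p_3, t+q_3$ in a neighborhood of $P_s$, giving the standard index-3 orbifold chart.

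Next, I would invoke the isomorphism $Z' \setminus \E \cong Y' \setminus \Gamma$ (from Proposition~\ref{prop!E}, where $\Gamma = \bigcup_{i=1}^{13}\Gamma_i$ is the bouquet of rational curves above the 13 nodes on $\E$) to identify the two unprojections of $\D$: concretely, $X' \setminus \Gamma \cong \overline{Y}' \setminus \E$. Since $\overline{Y}'$ is quasismooth outside $\E$, so is $X' \setminus \Gamma$. Because $\Gamma \cap \D = \emptyset$ by construction, the unprojection map $Y' \dashrightarrow X'$ is an isomorphism in a neighborhood of $\Gamma$, and since $Y'$ is quasismooth along $\Gamma$ by the previous proposition, $X'$ is quasismooth there as well. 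Thus $X'$ is quasismooth everywhere, and the images of $\D\subset Y'$ and $\E\subset Z'$ contribute the two claimed quotient orbifold points.

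For the Calabi--Yau conclusion, $K_{X'}=0$ follows because each of the two unprojections factorises as an ample small resolution of nodes followed by the crepant contraction of an exceptional divisor to an orbifold quotient (as in the proof of Proposition~\ref{prop!E}). Vanishing $h^1(X',\cO_{X'}) = 0$ follows either from the rationality of nodal and cyclic-quotient singularities, or from the Gorenstein property of the unprojected coordinate ring combined with the Goto--Watanabe theorem \cite[5.1.9--11]{GW}, exactly as in the proof of Theorem~\ref{thm!unprojto4}. The main obstacle is the first step: verifying that $\overline{Y}'$ is quasismooth outside $\E$ at the preimages of the 27 nodes on $\D \subset Z'$. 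This is a Jacobian computation in the style of \S\ref{s!codim3} and \S\ref{s!tom1}, where a standard choice of parameters $A',\dots,F'$ reduces the node locus on $\D$ to a transverse union of three components, on each of which the $3\times 3$ submatrix of partial derivatives of the defining Pfaffians can be checked to have maximal rank. Once this quasismoothness of $\overline{Y}'$ is in hand, the parallel unprojection argument is formal.
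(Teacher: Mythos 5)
Your proposal follows the paper's own route: the paper proves this proposition by remarking that the arguments of \S\ref{s!tom1}, i.e.\ the proof of Theorem~\ref{thm!unprojto4}, apply, and your parallel-unprojection argument (construct $\overline{Y}'$ by unprojecting $\D\subset Z'$, transfer quasismoothness through $Z'\setminus\E\cong Y'\setminus\Gamma$, treat $\Gamma$ via the isomorphism of $Y'$ with $X'$ near it using $\D\cap\E=\emptyset$, and deduce the Calabi--Yau property from crepancy together with rationality/Goto--Watanabe) is exactly that argument. Two small slips should be corrected: near $P_s\in\overline{Y}'$ the three unprojection equations eliminate $t,u,v$, the generators of $I_\D$ (not $x$, $u+p_3$, $t+q_3$, which generate $I_\E$ and are what is eliminated near the weight-$5$ point $P_w$), which is what gives the $\frac{1}{3}(1,1,1)$ chart; and the contraction of $\E$ yields the $\frac{1}{5}(1,1,3)$ point $P_w$, not a $\frac{1}{3}(1,1,1)$ point, so the further index-$3$ points of $X'$ are those inherited from $Z'$ meeting the index-$3$ stratum of $\P(1^3,3^3)$, with only $P_s$ arising as the image of a contracted divisor.
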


Theorem~\ref{main theorem}(\ref{thm!part}) now follows: the two Calabi--Yau 3-folds
$X$ and $X'$ have the same Hilbert series (they are calculated by identical
orbifold Riemann--Roch formulas), but have different
Euler characteristics since the number of nodes on $\D\subset Y$ and $\D\subset Y'$
differ; therefore they are members of two different deformation families.

The remaining cases of Theorem~\ref{main theorem} in codimensions~3 and~4
are proved by the same method. The key point is the starting configurations
$\D\cup\E\subset Z \subset w\P^5$.

In case (a), we start with $\D=(u=v=w=0)$ and
$\E=(u-\alpha = v-\beta = w-\gamma=0)$ inside an otherwise
general $Z_{6,6}\subset\P(1^3,3^3)$ with coordinates $x, y, z, u, v, w$, where
$\alpha$, $\beta$ and $\gamma$ are cubics in $x,y,z$.
If $\alpha$, $\beta$ and $\gamma$ are all chosen generally, then
$\D$ and $\E$ are disjoint; if there is a single linear relation among them,
then $\D\cap\E$ is 9 points, the intersection of two cubics in~$\P^2$.
After unprojecting $\E\subset Z$ and mounting the equations of the
resulting $Y\subset\P(1^3,3^4)$
in an antisymmetric matrix, as in~\eqref{e!j12}, these two cases
correspond to Jer$_{45}$ and Tom$_3$, respectively, and the
number of nodes on $Y$ differs by 9 between the two cases,
being those nodes lying at $\D\cap\E$,
which are resolved by the unprojection of~$\E$.

In case (c), in the same ambient $\P(1^3,3^3)$, with $\D=(z=v=w=0)$,
the three cases arise from the choice of
$\E=(z=u=v-\alpha_3(x,y)=0)$ or $(y=u=v-\alpha=0)$ with either $\alpha=0$ or~$x^3$.
Case (d) is similar to (a) but working with $Z_{6,8}\subset\P(1^3,3^2,5)$.

\section{Final remarks}
\label{s!comments}

Of course this is just the start.
The web of Calabi--Yaus is vast: compare with Batyrev--Kreuzer \cite{BK},
who navigate the web using conifold transitions starting from
toric hypersurfaces and find over 30,000 cases.
With our approach, the initial parameters are $P_1$, $P_2$ and the basket
of singularities $\cB$ (which we have taken to be isolated quotient singularities).
For example, varying $P_1$ and $P_2$ with these same possible baskets
$\cB=\left\{ n \times \frac{1}3(1,1,1), m \times \frac{1}5(1,1,3) \right\}$ gives
other candidates. In cases where $P_1\ge2$ and $m,n\ge1$ these are as follows:
\[
\begin{array}{ccccc|rcl}
P_1 & P_2 & n & m & \text{codim} &
  \multicolumn{3}{c}{\text{Calabi--Yau 3-fold}}  \\
\hline
2 & 5 & 1 & 3 & 4 & X_{8^3,9^3,10^3} &\subset& \P(1,1,3,3,4,5,5,5) \\
3 & 6 & & & & \multicolumn{3}{c}{\text{See Theorem~\ref{main theorem}}} \\
3 & 7 & 1 & 1 & 3 & X_{5,6^2,7,8} &\subset& \P(1,1,1,2,3,3,5)  \\
3 & 7 & 2 & 1 & 4 & X_{5^2, 6^4, 7, 8^2} &\subset& \P(1,1,1,2,3,3,3,5)  \\
4 & 10 & 1 & 1 & 3 & X_{4,6^3,8} &\subset& \P(1,1,1,1,3,3,5) \\
4 & 10 & 1 & 1 & 4 & X_{4^2, 6^5, 8^2} &\subset& \P(1,1,1,1,3,3,3,5) \\
4 & 11 & 1 & 1 & 4 &  X_{4^2, 5^2, 6^3, 7, 8} &\subset& \P(1,1,1,1,2,3,3,5) \\
5 & 15 & 1 & 1 & 4 & X_{4^4, 6^4, 8} &\subset& \P(1,1,1,1,1,3,3,5)
\end{array}
\]
In codimension~3 we construct a single deformation family, while again in codimension~4
we construct at least two deformation families in each case.

It is worth noting, finally, that 
we go to some trouble in \S\ref{s!codim3} to prove existence and quasismoothness even
in codimension~3.
Our caution is justified by an example of a Calabi--Yau 3-fold
$Y\subset\P(1,1,2,5,8,13,19)$ given in \cite[\S4.2.2]{BKZ}.
Any general such $Y$ fails to be quasismooth at a single point, where it
has a node: $Y$ is not (quasi)smoothable within its Pfaffian format.
This is striking, since it is easy to check that $Y$ is quasismooth
along all nontrivial orbifold strata of the ambient space, and the
common experience is that smoothness elsewhere is then routine.

Applying the methods of \S\ref{s!codim4},
the codimension~3 Calabi--Yau $Y$ admits an unprojection to a
quasismooth Calabi--Yau $X\subset\P(1,1,2,5,8,13,19,37)$ in codimension~4
which has a single quotient singularity, $\frac{1}{37}(5,13,19)$.
But, unlike all cases here, this arises only as a single Jerry unprojection,
and so provides only one deformation family of such~$X$.

\bigskip

\noindent Gavin Brown, \\
Mathematics Institute, University of Warwick, 
Coventry CV4 7AL, UK

\noindent Email: G.Brown@warwick.ac.uk

\bigskip

\noindent Konstantinos Georgiadis, \\
Department of Mathematical Sciences, Loughborough University, 
LE11 3TU, UK

\noindent Email: K.Georgiadis@lboro.ac.uk

\end{document}